\DeclarePairedDelimiter\abs{\lvert}{\rvert}%
\DeclarePairedDelimiter\norm{\lVert}{\rVert}%
\let\oldabs\abs
\def\abs{\@ifstar{\oldabs}{\oldabs*}}
\let\oldnorm\norm
\def\norm{\@ifstar{\oldnorm}{\oldnorm*}}
\newtheorem{theorem}{Theorem}
\newtheorem{lemma}[theorem]{Lemma}
\newtheorem{corollary}[theorem]{Corollary}
\newtheorem{proposition}[theorem]{Proposition}
\newtheorem*{question}{Question}
\theoremstyle{definition}
\theoremstyle{remark}
\newtheorem*{remark}{Remark}
\newtheorem{example}[theorem]{{\bf Example}}
\numberwithin{theorem}{section}
\numberwithin{proposition}{section}
\numberwithin{lemma}{section}
\numberwithin{corollary}{section}
\numberwithin{equation}{section}
\numberwithin{conjecture}{section}
\numberwithin{example}{section}
\setlist[enumerate,1]{before=}
\newcommand{\N}{\mathbb{N}}
\newcommand{\Z}{\mathbb{Z}}
\renewcommand{\pmod}[1]{\  \,  \left( \mathrm{mod} \,  #1 \right)}
\renewcommand{\pmod}[1]{\  \,  \left( \mathrm{mod} \,  #1 \right)}
\begin{document}

\title[Combinatorial results on $t$-cores and sums of squares]{Combinatorial results on $t$-cores and sums of squares}
\author{Joshua Males}
\address{450 Machray Hall, Department of Mathematics, University of Manitoba, Winnipeg, Canada}
\email{joshua.males@umanitoba.ca}
\author{Zack Tripp}
\address{Department of Mathematics, 1420 Stevenson Center, Vanderbilt University, Nashville, TN 37240}
\email{zachary.d.tripp@vanderbilt.edu}

\maketitle

\begin{abstract}
	We classify the connection between $t$-cores and self-conjugate $t$-cores to sums of squares. To do so, we provide explicit maps between $t$-core partitions and self-conjugate $t$-core partitions of a positive integer $n$ to representations of certain numbers as sums of squares. For example, the self-conjugate $4$-core partition $\lambda=(4,1,1,1)$ corresponds uniquely to the solution $61=6^2+5^2$. As a corollary, we completely classify the relationship between $t$-cores and Hurwitz class numbers.
	
	Using these tools, we see how certain sets of representations as sums of squares naturally decompose into families of $t$-cores. Finally, we construct an explicit map on partitions to explain the equality $2\operatorname{sc}_7(8n+1) = \operatorname{c}_4(7n+2)$ previously studied by Bringmann, Kane, and the first author.
\end{abstract}

\section{Introduction}
A \emph{partition} $\lambda$ of $n\in\N$ is a non-increasing sequence $\lambda \coloneqq (\lambda_1, \lambda_2, \dots, \lambda_s)$ of non-negative integers $\lambda_j$ such that $\sum_{1\leq j\leq s} \lambda_j = n$.
The \emph{Ferrers--Young diagram} of $\lambda$ is the $s$-rowed diagram

\begin{equation*}
\begin{matrix}
\text{\large $\bullet$} & \text{\large $\bullet$} & \cdots & \text{\large $\bullet$} & \qquad \text{ $\lambda_1$ dots}  \\
\text{\large $\bullet$} & \text{\large $\bullet$} & \cdots & \text{\large $\bullet$} & \qquad \text{ $\lambda_2$ dots}	\\
\cdot \\
\cdot \\
\text{\large $\bullet$} & \cdots & \text{\large $\bullet$} & {}  & \qquad \text{ $\lambda_s$ dots.}
\end{matrix}
\end{equation*}
We label the cells of the Ferrers--Young diagram as if it were a matrix, and let $\lambda_k'$ denote the number of dots in column $k$. The \emph{hook length} of the cell $(j,k)$ in the Ferrers--Young diagram of $\lambda$ equals
\begin{equation*}
h(j,k) \coloneqq \lambda_j + \lambda_k' -k-j+1.
\end{equation*}
If no hook length in any cell of a partition $\lambda$ is divisible by $t$, then $\lambda$ is a \emph{$t$-core partition}. 
A partition $\lambda$ is said to be \emph{self-conjugate} if it remains the same when rows and columns are switched.

\begin{example}
	The partition $\lambda = (3,2,1)$ of $6$ has the Ferrers--Young diagram
	\begin{equation*}
	\begin{matrix}
	\text{\large $\bullet$} & \text{\large $\bullet$} & \text{\large $\bullet$} \\
	\text{\large $\bullet$} & \text{\large $\bullet$} \\
	\text{\large $\bullet$}
	\end{matrix}
	\end{equation*}
	and has hook lengths 
	$h(1,1)=5$, $h(1,2) = 3$, $h(1,3) = 1$, $h(2,1) = 3$, $h(2,2) = 1$, and $h(3,1) = 1$. Therefore, $\lambda$ is a $t$-core partition for all $t \not\in\{ 1,3,5 \}$. Furthermore, switching rows and columns leaves $\lambda$ unaltered, and so $\lambda$ is self-conjugate.
\end{example}

The theory of $t$-core partitions is intricately linked to various areas of number theory and beyond. For example, $t$-core partitions encode the modular representation theory of symmetric groups $S_n$ and $A_n$ (see e.g. \cite{MR1321575,MR671655}). Furthermore, in an influential paper \cite{garvan1990cranks}, Garvan, Kim, and Stanton  used $t$-core partitions to investigate the famous Ramanujan congruences for the partition function $p(n)$, combinatorially proving the special cases given by
\begin{align*}
p(5n+4) \equiv 0 \pmod{5}, \qquad p(7n+5) \equiv 0 \pmod{7}, \qquad p(11n+6) \equiv 0 \pmod{11}.
\end{align*}
 For $t,n \in \N$ we let $\operatorname{c}_t(n)$ denote the number of $t$-core partitions of $n$, along with $\operatorname{sc}_t(n)$ the number of self-conjugate $t$-core partitions of $n$. We also let the set of $t$-cores of $n$ be denoted by $C_t(n)$, and the set of self-conjugate $t$-cores be $SC_t(n)$. In \cite[Theorem 1.1]{Han2010}, Han relates the set of $t$-core partitions for odd $t$ to certain representations as sums of $t$ squares. Han's result builds on \cite{garvan1990cranks}, where Garvan, Kim, and Stanton showed that

\begin{align}\label{Eqn: c_t generating function}
\sum_{n \geq 1} c_t(n) q^n = \sum_{\substack{n \in \Z^t \\ n \cdot 1 = 0}} q^{\frac{t}{2}|n|^2 +b\cdot n}
\end{align}
for $b=[0,1,\dots,t-1]$. Garvan, Kim, and Stanton also showed that \cite[equation (7.4)]{garvan1990cranks}
\begin{align}\label{Eqn: sc_t generating function}
\sum_{n \geq 1} \operatorname{sc}_t(n)q^n = \sum_{n \in \Z^{\lfloor\frac{t}{2} \rfloor}} q^{t |n|^2 + c\cdot n},
\end{align}
where
\begin{align*}
c \coloneqq \begin{cases}
[1,3,\dots,t-1] &\text{ if $t$ is even},\\
[2,4,\dots, t-1] &\text{ if $t$ is odd.}
\end{cases}
\end{align*}
Their proofs relied on constructing an explicit bijection by way of extended $t$-residue diagrams (defined in Section \ref{Section: abaci, t-residue, lists}). With equations \eqref{Eqn: c_t generating function} and \eqref{Eqn: sc_t generating function} in hand, a simple argument via completing the square on the right-hand side yields that for fixed $t$ there is a bijection between $t$-cores and self-conjugate $t$-cores and certain sets of sums of squares. Han proved that for odd $t$ there is a combinatorial connection to representations of certain numbers as sums of squares. To prove this, he showed a bijection between certain normalizations of the $H$-set of a partition and a sum of squares (see \cite{Han2010} for details).

We obtain an explicit combinatorial explanation for all $t$ by considering abaci and the $N$-codings of Garvan, Kim, and Stanton, and thus we relate every $t$-core to a particular sum of squares in the following theorem. Furthermore, we provide related results for all self-conjugate $t$-cores.

We call the set of partitions with fixed number of parts $s$ modulo $t$ a \emph{family} of partitions. The equivalence relation $\sim_{BKM}$ on sets of sums of squares is defined to be permutations and sign changes of variables $x_j$. Recall from \cite{BKM} that when $t$ is odd there are no self-conjugate partitions with $s \equiv \frac{t+1}{2} \pmod{t}$.

\begin{theorem}\label{Thm: main t-cores}
	Let $t \geq 3$. Then there is an explicit bijection between each of the $t$ families of partitions in $C_t(n)$ and a set of solutions, including congruence conditions and a condition on the sum of the variables, to $2tn+ \frac{t(t-1)(2t-1)}{6} + 3(t-1)^2$ as a sum of $t$ squares.  In particular, the set of solutions is
\small	\begin{align*}
	\{ \boldsymbol{x} \in \Z^{t} \colon 2tn+ \frac{t(t-1)(2t-1)}{6} + 3(t-1)^2 =  \sum_{j=0}^{t-1} x_j^2,~ x_j \equiv \pm j \pmod{t}, \sum_{j=0}^{t-1} \pm x_j= \frac{t(t-1)}{2}\}/\sim_{BKM},
	\end{align*}
where the $\pm$ are chosen to have the same sign.
\end{theorem}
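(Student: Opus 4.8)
The plan is to obtain the bijection by composing the Garvan--Kim--Stanton encoding of $t$-cores with a completing-the-square substitution, and then to read the quotient by $\sim_{BKM}$ as the mechanism that identifies the $t$ families. The input is \eqref{Eqn: c_t generating function}, which records the GKS bijection: each $t$-core $\lambda$ of $n$ is encoded by a vector $\boldsymbol{n}\coloneqq(n_0,\dots,n_{t-1})\in\Z^t$ with $\sum_{j=0}^{t-1}n_j=0$ and $n=\frac{t}{2}\abs{\boldsymbol{n}}^2+b\cdot\boldsymbol{n}$, where $b=(0,1,\dots,t-1)$. I would first recall from Section~\ref{Section: abaci, t-residue, lists} precisely how $\boldsymbol{n}$ is read off from the abacus of $\lambda$ via the $N$-coding, keeping careful track of the number of beads $N$, since this is what will govern the family decomposition.

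The key step is completing the square. Setting $x_j\coloneqq tn_j+j$ for $0\le j\le t-1$, a direct expansion gives
\[
\sum_{j=0}^{t-1}x_j^2=t^2\abs{\boldsymbol{n}}^2+2t\,(b\cdot\boldsymbol{n})+\sum_{j=0}^{t-1}j^2=2t\Big(\tfrac{t}{2}\abs{\boldsymbol{n}}^2+b\cdot\boldsymbol{n}\Big)+\frac{t(t-1)(2t-1)}{6}=2tn+\frac{t(t-1)(2t-1)}{6}.
\]
Moreover $x_j\equiv j\pmod{t}$ by construction, and $\sum_{j=0}^{t-1}x_j=t\sum_{j=0}^{t-1}n_j+\frac{t(t-1)}{2}=\frac{t(t-1)}{2}$ since $\sum_j n_j=0$. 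Thus $\lambda\mapsto\boldsymbol{x}$ is an explicit injection of $C_t(n)$ into the set of $\boldsymbol{x}\in\Z^t$ solving $2tn+\frac{t(t-1)(2t-1)}{6}=\sum_j x_j^2$ with $x_j\equiv j\pmod{t}$ and $\sum_j x_j=\frac{t(t-1)}{2}$; reversing the substitution via $n_j=(x_j-j)/t$ (which forces $\sum n_j=0$) shows it is a bijection onto that \emph{unquotiented} set.

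It remains to account for the $t$ families and for the stated residue condition $x_j\equiv\pm j\pmod{t}$. Here I would use that the $N$-coding depends on the number of beads: incrementing $N$ by one cyclically permutes the runner populations and correspondingly shifts the residue block of $\boldsymbol{x}$, while the number of parts $s$ of $\lambda$ is pinned down modulo $t$ by $N\bmod t$. Consequently each family (fixed $s\bmod t$) maps bijectively onto a copy of the solution set whose residues form a cyclically shifted block; the permutation part of $\sim_{BKM}$ absorbs this shift and any relabeling of runners, so that all $t$ families are carried onto the \emph{same} quotient, while the sign-change part is what relaxes the exact congruence $x_j\equiv j$ to $x_j\equiv\pm j$. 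I would then verify that on each individual family the map descends to a bijection onto the quotient set of the theorem: surjectivity because the residues of any admissible $\boldsymbol{x}$ can be permuted and signed into the canonical pattern of that family, and injectivity because two $t$-cores in one family with $\sim_{BKM}$-equivalent images must yield identical vectors once the family normalization is imposed.

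The completing-the-square computation is routine; the genuine difficulty is the combinatorial bookkeeping of the $N$-coding --- establishing the exact dictionary between $s\bmod t$, the choice of $N$, and the cyclic shift of the residue block --- together with checking that the descent to $\sim_{BKM}$ is a well-defined bijection on each family. The most delicate point is that the linear constraint $\sum_j x_j=\frac{t(t-1)}{2}$ is not invariant under sign changes, so I expect the main obstacle to be verifying simultaneously that the quotient set is well defined and that the $\sim_{BKM}$-action has trivial stabilizer on the image of a single family, which is precisely what secures injectivity.
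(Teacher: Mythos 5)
Your completing-the-square computation is correct and is essentially the computation in the paper: the paper runs it family by family, starting from the abacus $(0,a_1,\dots,a_{t-1})$ of a partition with $s\equiv k\pmod{t}$ parts, converting to the $N$-coding via Proposition~\ref{Proposition: list to abacus}, applying Lemma~\ref{Lemma: Garvan size of lists}, and then completing the square; your substitution $x_j=tn_j+j$ performed directly on the $N$-coding is the same manipulation (and is in fact closer to the paper's proof of Theorem~\ref{Thm: alt main t-cores}). Up to that point your argument is fine and yields a clean bijection from all of $C_t(n)$ onto the unquotiented set with the exact congruences $x_j\equiv j\pmod{t}$ and $\sum_j x_j=\frac{t(t-1)}{2}$.

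The gap is in your treatment of the families. You assert that each of the $t$ families maps bijectively onto the \emph{same} quotient set, the permutation part of $\sim_{BKM}$ absorbing the cyclic shift of the residue block. This cannot be right: the families partition $C_t(n)$ and in general have different cardinalities (for $n=1$ and $t=4$ only the family $s\equiv 1\pmod 4$ is nonempty), so they cannot all be in bijection with one fixed set. What the paper actually does is treat each residue class of $s$ separately: the family $s\equiv k\pmod t$ lands on the solutions whose coordinates carry a specific shifted congruence pattern, these images are disjoint, and the displayed set with $x_j\equiv\pm j\pmod{t}$ is their union. The cyclic shift is \emph{not} absorbed by $\sim_{BKM}$, precisely because the linear constraint $\sum_j x_j=\frac{t(t-1)}{2}$ pins down actual values rather than residues, and sign changes do not preserve it. You correctly flag this non-invariance and the need to check stabilizers as ``the most delicate point,'' but since your identification of the families runs entirely through the quotient, that unresolved point is not a technical afterthought in your argument --- it is the missing step. (To be fair, the paper's own proof is also terse here, proving only the $s\equiv 0\pmod t$ case in detail and asserting the others are similar, but it never claims the families share a common image.)
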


\begin{theorem}\label{Thm: SoS - odd t}
	Let $t \geq 3$ be odd. Then there is an explicit bijection between each of the $t-1$ families of partitions in $SC_t(n)$ and a set of solutions, including congruence conditions, to $tn+ \frac{t(t^2-1)}{24}$ as a sum of $\frac{t-1}{2}$ squares. In particular, the set of solutions is
	\begin{align*}
	\{ \boldsymbol{x} \in \Z^{t-1} \colon tn+ \frac{t(t^2-1)}{24} =  \sum_{j=0}^{\frac{t-1}{2}} x_j^2,~ x_j \equiv \pm( j+1) \pmod{t} \}/\sim_{BKM}.
	\end{align*}
\end{theorem}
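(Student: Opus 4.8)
The plan is to derive the bijection from the Garvan--Kim--Stanton generating function \eqref{Eqn: sc_t generating function}, which for odd $t$ has $\tfrac{t-1}{2}$ summation variables $\boldsymbol n = (n_1,\dots,n_{(t-1)/2})$ and weight vector $c=[2,4,\dots,t-1]$, so that the exponent is $t|\boldsymbol n|^2 + \sum_{k=1}^{(t-1)/2} 2k\,n_k$. First I would complete the square in each variable separately: since $t n_k^2 + 2k n_k = \tfrac1t (t n_k + k)^2 - \tfrac{k^2}{t}$, the substitution $x_k \coloneqq t n_k + k$ converts the exponent into $\tfrac1t \sum_k x_k^2 - \tfrac1t\sum_k k^2$. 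As $n_k$ ranges over $\Z$, $x_k$ ranges bijectively over the residue class $x_k \equiv k \pmod t$, which is the source of the congruence conditions. Using $\sum_{k=1}^{(t-1)/2} k^2 = \tfrac{1}{6}\cdot\tfrac{t-1}{2}\cdot\tfrac{t+1}{2}\cdot t = \tfrac{t(t^2-1)}{24}$ and comparing coefficients of $q^n$ then gives $\sum_{k=1}^{(t-1)/2} x_k^2 = tn + \tfrac{t(t^2-1)}{24}$, which up to the reindexing $j=k-1$ is exactly the stated sum of $\tfrac{t-1}{2}$ squares.

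This already yields a bijection between $SC_t(n)$ and the fixed-residue solution set $A \coloneqq \{\boldsymbol x : x_k \equiv k \pmod t\}$. To recast it in the symmetric form $x_k \equiv \pm k \pmod t$ modulo $\sim_{BKM}$, I would note that $\sum_k x_k^2$ is invariant under the sign changes $x_k \mapsto -x_k$, which send the residue class $k$ to $-k$. Because $k\in\{1,\dots,\tfrac{t-1}{2}\}$ forces $x_k \not\equiv 0 \pmod t$, these sign changes act freely, so every $\sim_{BKM}$-orbit in $\{\boldsymbol x : x_k \equiv \pm k\}$ has size $2^{(t-1)/2}$ and contains exactly one representative in $A$. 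Hence the quotient is canonically identified with $A$, and the two formulations carry the same cardinality $\operatorname{sc}_t(n)$. One also checks here that permutations cannot preserve the set, since the residue pairs $\{k,-k\}$ for $1\le k\le \tfrac{t-1}{2}$ are pairwise distinct modulo $t$, so $\sim_{BKM}$ reduces to sign changes alone.

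The remaining and most delicate point is the decomposition into the $t-1$ families. For this I would return to the abacus and $N$-coding descriptions from Section~\ref{Section: abaci, t-residue, lists} and express the number of parts $s$ of the self-conjugate $t$-core attached to $\boldsymbol n$ as an explicit function of the coordinates, then reduce modulo $t$. The goal is to show that $s \bmod t$ is governed by the sign data of $\boldsymbol x$ (equivalently, by the signs of the $n_k$), so that the number-of-parts statistic sorts $A \cong SC_t(n)$ into families while the $\sim_{BKM}$ sign action permutes these sign sectors. Finally, I would invoke \cite{BKM} to confirm that $s \equiv \tfrac{t+1}{2}\pmod t$ is never attained, so that exactly $t-1$ residue classes, hence $t-1$ families, occur.

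The analytic half of the argument (completing the square and evaluating the constant) is routine; I expect the main obstacle to be this last combinatorial step. Concretely, the difficulty is to translate the abacus/$N$-coding formula for the number of parts into a clean statement about the signs of the completed-square variables, and to verify that this translation respects the $\sim_{BKM}$ identifications and realizes precisely the $t-1$ admissible residue classes with the single class $\tfrac{t+1}{2}$ excluded. Care is also needed so that the reindexing between the generating-function variables $n_k$ and the stated variables $x_j$ is carried through consistently across the congruence and square-sum conditions.
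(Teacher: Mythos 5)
Your first two paragraphs are correct and reproduce the computation that underlies the paper's proof: completing the square in the Garvan--Kim--Stanton exponent $t|\boldsymbol n|^2+c\cdot\boldsymbol n$ via $x_k=tn_k+k$, evaluating $\sum_{k=1}^{(t-1)/2}k^2=\tfrac{t(t^2-1)}{24}$, and observing that sign changes act freely on the nonzero residue classes so that the $\sim_{BKM}$-quotient of the symmetric set is canonically the fixed-residue set. This gives a bijection between $SC_t(n)$ as a whole and the displayed solution set. However, the theorem is stated as a bijection between \emph{each of the $t-1$ families} (partitions sorted by $s\bmod t$) and a specific sub-collection of solutions, and that is precisely the step you defer. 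The paper's proof is organized around this from the outset: for each residue of $s$ modulo $t$ it uses Proposition~\ref{Proposition: list to abacus} to write the $N$-coding of a self-conjugate $t$-core with abacus $(0,a_1,\dots,a_{t-1})$ explicitly in terms of $r=\lfloor s/t\rfloor$ and the $a_j$ (e.g.\ $[-r,a_1-r,\dots,0,\dots,r-a_1,r]$ when $s=tr$), and then completes the square family by family, so the family-specific congruence data (all $x_j\equiv\tfrac{t-1-2j}{2}$ for $s\equiv 0$, the single flip $x_0\equiv\tfrac{t+1}{2}$ for $s\equiv1$, and so on) falls out of the computation rather than having to be recovered afterwards.

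The concrete gap is in your third paragraph. The proposed mechanism --- that $s\bmod t$ ``is governed by the sign data of $\boldsymbol x$ (equivalently, by the signs of the $n_k$)'' and that the families are sign sectors permuted by $\sim_{BKM}$ --- cannot be right as stated: there are $2^{(t-1)/2}$ sign patterns but only $t-1$ families, and these counts disagree for every odd $t\ge 7$. Moreover, in your normalized set $A$ every coordinate has the fixed residue $+k$ regardless of the sign of $n_k$, so residue signs cannot distinguish families inside $A$ at all. What actually determines the family is which position of the abacus is forced to vanish, i.e.\ which $n_\ell$ is pinned to $\pm\alpha_\ell$ in the notation of Proposition~\ref{Proposition: list to abacus}; translating that into conditions on the $x_j$ is exactly the bookkeeping the paper carries out case by case (together with the fact from \cite{BKM} that $s\equiv\tfrac{t+1}{2}\pmod t$ never occurs, which you correctly plan to cite). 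So the part of your argument that establishes the global bijection is sound and essentially identical to the paper's, but the per-family statement --- the actual content of the theorem beyond the ``simple argument via completing the square'' already noted after \eqref{Eqn: sc_t generating function} --- is both unexecuted and aimed at an incorrect dichotomy, and would need to be replaced by the Proposition~\ref{Proposition: list to abacus} computation.
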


\begin{theorem}\label{Thm: SoS - even t}
	Let $t \geq 3$ be even. Then there is an explicit bijection between each of the $t$ families of partitions in $SC_t(n)$ and a set of solutions, including congruence conditions, to $4tn+ \frac{t(t^2-1)}{6}$ as a sum of $\frac{t}{2}$ squares. In particular, the set of solutions is 
	\begin{align*}
	\{ \boldsymbol{x} \in \Z^{t} \colon 4tn+\frac{t(t^2-1)}{6} =  \sum_{j=0}^{\frac{t}{2}-1} x_j^2,~ x_j \equiv \pm( 2j+1) \pmod{2t} \}/\sim_{BKM}.
	\end{align*}
\end{theorem}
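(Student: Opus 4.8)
The plan is to run the even-$t$ analogue of the arguments behind Theorems~\ref{Thm: main t-cores} and \ref{Thm: SoS - odd t}, feeding in the data of \eqref{Eqn: sc_t generating function} for even $t$: namely $\lfloor t/2\rfloor = t/2$ and $c = [1,3,\dots,t-1]$, so that $c_j = 2j+1$ for $0 \le j \le \frac{t}{2}-1$. First I would read \eqref{Eqn: sc_t generating function} as the statement that $\operatorname{sc}_t(n)$ counts the vectors $\boldsymbol m \in \Z^{t/2}$ satisfying $t\sum_j m_j^2 + \sum_j (2j+1)m_j = n$, this equality being precisely the abacus / $N$-coding bijection of Garvan, Kim, and Stanton recalled in Section~\ref{Section: abaci, t-residue, lists}.

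The concrete step is to complete the square coordinatewise. Because each $c_j = 2j+1$ is odd, the correct substitution is $x_j \coloneqq 2t m_j + (2j+1)$ (scaling by $2t$ rather than $t$, which is the structural difference from the odd case), giving $\frac{x_j^2}{4t} = t m_j^2 + (2j+1) m_j + \frac{(2j+1)^2}{4t}$. Summing over $j$ and using that $1,3,\dots,t-1$ are the first $\frac{t}{2}$ odd integers, so that $\sum_{j=0}^{t/2-1}(2j+1)^2 = \frac{t(t^2-1)}{6}$, the relation $t\sum_j m_j^2 + \sum_j (2j+1)m_j = n$ becomes $\sum_{j=0}^{t/2-1} x_j^2 = 4tn + \frac{t(t^2-1)}{6}$. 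Since $m_j \mapsto x_j$ is a bijection of $\Z$ onto the residue class $2j+1 \pmod{2t}$, this produces the congruences $x_j \equiv 2j+1 \pmod{2t}$, and the sign changes contained in $\sim_{BKM}$ (which send $x_j \equiv 2j+1$ to $x_j \equiv -(2j+1)$) account for the $\pm$ in the statement. The appearance of the modulus $2t$ and of $4tn$, in place of $t$ and $tn$ in Theorem~\ref{Thm: SoS - odd t}, is forced exactly by the odd parity of the $c_j$.

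It remains to match the $t$ families—the partitions in $SC_t(n)$ with a fixed number of parts $s \pmod t$—to the quotient by $\sim_{BKM}$. Here I would use the $N$-coding to write $s \bmod t$ as an explicit statistic of the coding vector $\boldsymbol m$, and then argue that fixing a family determines the vector up to the coordinatewise sign changes generating the relevant part of $\sim_{BKM}$ (these being precisely the moves that, on the sums-of-squares side, toggle each $x_j$ between the classes $2j+1$ and $-(2j+1) \bmod 2t$). One must also verify, as recorded in \cite{BKM}, that for even $t$ all $t$ residue classes of $s$ are realized, in contrast to the odd case of Theorem~\ref{Thm: SoS - odd t}, where $s \equiv \frac{t+1}{2}$ is empty and only $t-1$ families survive.

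I expect this last bookkeeping to be the main obstacle. The completing-the-square computation is routine once the substitution $x_j = 2t m_j + (2j+1)$ is fixed, but correctly translating the number-of-parts statistic into the sign-and-permutation data of $\sim_{BKM}$—and in particular certifying that the restriction of the bijection to a single family lands on the full quotient set—requires careful use of the abacus machinery of Section~\ref{Section: abaci, t-residue, lists} rather than the generating function \eqref{Eqn: sc_t generating function} alone.
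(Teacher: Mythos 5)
Your core computation is the same one the paper runs: complete the square on the quadratic form $t\sum_j m_j^2+\sum_j(2j+1)m_j$ coming from the Garvan--Kim--Stanton parametrization, via $x_j=2tm_j+(2j+1)$ together with $\sum_{j=0}^{t/2-1}(2j+1)^2=\frac{t(t^2-1)}{6}$. This is literally the paper's substitution $x_j=2tr+(t-1-2j)-2ta_j$ after writing $m_j=a_j-r$ and reversing the index, so the arithmetic heart of your argument checks out. The packaging differs: the paper does not start from the generating function \eqref{Eqn: sc_t generating function} but instead fixes a family $s\equiv i\pmod t$ at the outset, writes down the abacus $(0,a_1,\dots,a_{t-1})$ of that family, converts it to an $N$-coding via Proposition \ref{Proposition: list to abacus}, and only then completes the square. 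The benefit of that order of operations is that the step you identify as ``the main obstacle'' never arises separately: each residue class of $s$ forces a specific shape of the full $N$-coding (compare Table \ref{Table: list families} for $t=6$), hence a specific signed and ordered congruence signature for the $x_j$, and the $t$ signatures so produced are exactly the ones identified under $\sim_{BKM}$.

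The one point where your sketch is not merely unfinished but misstated is that family-matching step. Fixing $s\bmod t$ does \emph{not} determine the coding vector up to coordinatewise sign changes: within a single family the parameters still range over a full lattice, and each coordinate $x_j$ then lies in one \emph{fixed} residue class modulo $2t$ (e.g.\ always $+3$, never $-3\equiv 2t-3$), so the sign changes of $\sim_{BKM}$ do not act within a family at all --- they permute the signatures attached to \emph{different} families. What you actually need, and what Proposition \ref{Proposition: list to abacus} supplies, is the explicit $N$-coding shape for each residue class of $s$; feeding those into the completing-the-square identity you already have produces the family-specific congruences directly, with no further orbit argument. So the gap is fillable with exactly the abacus machinery you point to, but the assertion you propose to prove in that step should be replaced by the shape computation above.
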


 For example, a similar phenomenon holds for self-conjugate $4$-cores, where we obtain explicitly that the self-conjugate $4$-core partition $\lambda = (4,1,1,1)$ is uniquely mapped to the solution $61 = 6^2+5^2$, a case which we elucidate in Section \ref{Section: sc4}. Note that this agrees with previous results on $SC_4$ given in \cite{alpoge2014self}, while Theorem 1.4 is more general but states that this particular $4$-core corresponds to the solution $122 = 11^2+1^2$.

The above theorems are proved using the methods of \cite{ono19974, BKM}. However, there is also a natural way to write related results in the language of Han's work \cite{Han2010}. Although the results are similar to the previous theorems, we will present them independently in order to illuminate the deep connection between the results in \cite{ono19974, BKM} and \cite{Han2010}.

\begin{theorem}\label{Thm: alt main t-cores}
	There is an explicit bijection between $C_t(n)$ and certain representations of $8tn + \frac{t(t^2-1)}{3}$ as a sum of $t$ squares, namely
	\begin{equation*}
		\left\{ (w_0, \dots, w_{t-1})\in\Z^t : \sum\limits_{k=0}^{t-1}w_k = 0, \sum\limits_{k=0}^{t-1}w_k^2 = 8tn + \frac{t(t^2-1)}{3}, w_k \equiv 2k+1 - t \pmod{2t} \right\}.
	\end{equation*}
\end{theorem}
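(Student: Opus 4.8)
The plan is to reduce Theorem~\ref{Thm: alt main t-cores} to the lattice-point description of $t$-cores recorded in \eqref{Eqn: c_t generating function} by means of a single affine change of variables, and then to recognize the resulting vectors as the data produced by Han's $H$-set construction. First I would invoke the explicit Garvan--Kim--Stanton bijection \cite{garvan1990cranks} underlying \eqref{Eqn: c_t generating function}, which realizes $C_t(n)$ as the set
\begin{equation*}
S \coloneqq \Bigl\{ \boldsymbol{n} \in \Z^t : \textstyle\sum_{k=0}^{t-1} n_k = 0,\ \tfrac{t}{2}\abs{\boldsymbol{n}}^2 + \boldsymbol{b}\cdot\boldsymbol{n} = n \Bigr\}, \qquad \boldsymbol{b} = (0,1,\dots,t-1).
\end{equation*}

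Next I would define $\Phi \colon S \to \Z^t$ by $w_k = 2t\, n_k + (2k+1-t)$ for $0 \le k \le t-1$ and verify that $\Phi$ lands in the target set. The congruence $w_k \equiv 2k+1-t \pmod{2t}$ is immediate. Using $\sum_k n_k = 0$ one checks $\sum_k w_k = \sum_{k=0}^{t-1}(2k+1-t) = 0$, giving the linear constraint. For the quadratic constraint, expanding $w_k^2 = 4t^2 n_k^2 + 4t(2k+1-t)n_k + (2k+1-t)^2$ and again using $\sum_k n_k = 0$ to annihilate the term $(1-t)\sum_k n_k$ collapses the cross terms to $8t\,\boldsymbol{b}\cdot\boldsymbol{n}$, so that
\begin{equation*}
\sum_{k=0}^{t-1} w_k^2 = 8t\Bigl( \tfrac{t}{2}\abs{\boldsymbol{n}}^2 + \boldsymbol{b}\cdot\boldsymbol{n}\Bigr) + \sum_{k=0}^{t-1}(2k+1-t)^2 = 8tn + \frac{t(t^2-1)}{3},
\end{equation*}
the last constant coming from the elementary evaluation $\sum_{k=0}^{t-1}(2k+1-t)^2 = \tfrac{t(t^2-1)}{3}$. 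The map $\Phi$ is then a bijection onto the stated target set, with inverse $n_k = \bigl(w_k-(2k+1-t)\bigr)/(2t)$: the congruence condition on the $w_k$ guarantees integrality of the $n_k$, the condition $\sum_k w_k = 0$ forces $\sum_k n_k = 0$, and reversing the computation above recovers the defining equation of $S$. Composing $\Phi$ with the Garvan--Kim--Stanton bijection yields the desired explicit bijection between $C_t(n)$ and the representations of $8tn+\tfrac{t(t^2-1)}{3}$.

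The step I expect to require the most care is not the completion of squares, which is routine, but rather the identification promised by the phrase ``in the language of Han's work'': I would need to recall the precise definition of the $H$-set of a partition and check that $\lambda \mapsto \boldsymbol{w}$ coincides, on the nose, with Han's assignment of (suitably shifted and doubled) bead positions to a $t$-core, so that the bijection is genuinely Han's rather than merely an algebraic recoordinatization of \eqref{Eqn: c_t generating function}. In particular, since the result of \cite{Han2010} is stated only for odd $t$, I would verify that the construction extends verbatim to even $t$, where the residues $2k+1-t \pmod{2t}$ become odd rather than even; this parity bookkeeping is the one place where the even and odd cases genuinely differ, and it is where I would concentrate the careful checking.
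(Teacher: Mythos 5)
Your proposal is correct and follows essentially the same route as the paper: the paper's proof defines exactly the map $w_k = 2tn_k + 2k+1-t$ on the $N$-coding from Lemma~\ref{Lemma: Garvan size of lists}, verifies the linear, congruence, and quadratic conditions by the same expansion, and inverts it the same way. Your closing concern about matching Han's $H$-set on the nose is not needed for the statement as given (which only asserts an explicit bijection), and the paper likewise treats the connection to \cite{Han2010} as motivation rather than as part of the proof.
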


Analogous to Han's work, this map will have a simple definition using the $N$-coding of Garvan, Kim, and Stanton \cite{garvan1990cranks}. Using what is known about the $N$-coding of self-conjugate $t$-cores, Theorem~\ref{Thm: alt main t-cores} will allow us to derive a description of self-conjugate $t$-cores as representations of a number into $\lfloor \frac t2 \rfloor$ squares instead.

\begin{theorem}\label{Thm: alt SoS}
	There is an explicit bijection between $SC_t(n)$ and certain representations of $4tn + \frac{t(t^2-1)}{6}$ as a sum of $\lfloor \frac t2 \rfloor$ squares, namely
	\begin{equation*}
		\left\{ (w_0, \dots, w_{\lfloor \frac t2 \rfloor-1})\in\Z^{\lfloor \frac t2 \rfloor} : \sum\limits_{k=0}^{\lfloor \frac t2 \rfloor-1}w_k^2 = 4tn + \frac{t(t^2-1)}{6}, w_k \equiv 2k+1 - t \pmod{2t} \right\}.
	\end{equation*}
\end{theorem}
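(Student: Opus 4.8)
The plan is to deduce Theorem~\ref{Thm: alt SoS} directly from Theorem~\ref{Thm: alt main t-cores} by tracking how partition conjugation acts on the $N$-coding. Write $\Phi \colon C_t(n) \to \mathcal{W}_t(n)$ for the bijection of Theorem~\ref{Thm: alt main t-cores}, where $\mathcal{W}_t(n)$ denotes the set of vectors $(w_0,\dots,w_{t-1})$ described there. The key structural input is that conjugation of $t$-cores is intertwined by $\Phi$ with the coordinate involution
\begin{equation*}
\iota \colon (w_0,\dots,w_{t-1}) \longmapsto (-w_{t-1}, -w_{t-2}, \dots, -w_0), \qquad (\iota w)_k = -w_{t-1-k},
\end{equation*}
that is, $\Phi(\lambda') = \iota(\Phi(\lambda))$ for every $t$-core $\lambda$ with conjugate $\lambda'$. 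First I would check that $\iota$ is a well-defined involution of $\mathcal{W}_t(n)$: the sum-of-squares value and the linear constraint $\sum_k w_k = 0$ are manifestly $\iota$-invariant, and the congruence conditions are preserved because $2(t-1-k)+1-t \equiv -(2k+1-t) \pmod{2t}$, so the residue attached to index $t-1-k$ is exactly the negative of the one attached to index $k$.

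Establishing the intertwining relation $\Phi(\lambda') = \iota(\Phi(\lambda))$ is the main obstacle and the only place where genuine combinatorics enters. Since $\Phi$ is built from the $N$-coding of Garvan, Kim, and Stanton, this reduces to the known description of how the $N$-coding (equivalently, the associated abacus or beta-set) transforms under conjugation: passing to the conjugate partition reverses the order of the $t$ runners and negates the displacement along each, which is precisely the action $w_k \mapsto -w_{t-1-k}$. I would record this as a lemma, citing or reproving it from the abacus formalism of Section~\ref{Section: abaci, t-residue, lists}.

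Granting the intertwining relation, a $t$-core $\lambda$ is self-conjugate if and only if $\Phi(\lambda)$ is fixed by $\iota$, i.e.\ $w_k = -w_{t-1-k}$ for all $k$. Restricting $\Phi$ therefore gives a bijection between $SC_t(n)$ and the $\iota$-fixed subset of $\mathcal{W}_t(n)$. It remains to identify this fixed subset with the target set of Theorem~\ref{Thm: alt SoS} by projecting onto the first $\lfloor t/2\rfloor$ coordinates. When $t$ is even the indices pair up as $(k,t-1-k)$ with no fixed index, so a fixed vector is determined freely by $w_0,\dots,w_{t/2-1}$ via $w_{t-1-k}=-w_k$; when $t$ is odd the central index $k=\frac{t-1}{2}$ satisfies $w_k=-w_k$, forcing $w_{(t-1)/2}=0$ (consistent with its residue $2\cdot\frac{t-1}{2}+1-t\equiv 0 \pmod{2t}$), and the remaining free coordinates are $w_0,\dots,w_{(t-3)/2}$. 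In either case there are exactly $\lfloor t/2\rfloor$ free coordinates, the constraint $\sum_k w_k=0$ becomes vacuous, and
\begin{equation*}
\sum_{k=0}^{t-1} w_k^2 = 2\sum_{k=0}^{\lfloor t/2\rfloor-1} w_k^2,
\end{equation*}
so $\sum_{k=0}^{\lfloor t/2\rfloor-1} w_k^2 = \tfrac12\bigl(8tn+\tfrac{t(t^2-1)}{3}\bigr) = 4tn+\tfrac{t(t^2-1)}{6}$. The congruences $w_k\equiv 2k+1-t \pmod{2t}$ for the retained coordinates are inherited verbatim, and the inverse map reconstructs the full $\iota$-fixed vector, completing the bijection onto the set in Theorem~\ref{Thm: alt SoS}.
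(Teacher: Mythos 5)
Your proposal is correct and follows essentially the same route as the paper: both deduce the result from Theorem~\ref{Thm: alt main t-cores} by observing that conjugation sends the $N$-coding $[n_0,\dots,n_{t-1}]$ to $[-n_{t-1},\dots,-n_0]$, so self-conjugacy is equivalent to $w_k=-w_{t-1-k}$, and then restricting to the fixed vectors. The paper states this in three lines and leaves the projection onto the first $\lfloor t/2\rfloor$ coordinates, the vanishing of the middle coordinate for odd $t$, and the halving of the sum of squares implicit; your write-up simply makes those routine verifications explicit.
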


Aside from these general theorems, we highlight certain cases of sums of squares and their relationships to other objects. As a corollary of Theorem \ref{Thm: SoS - even t}, we complete the picture of self-conjugate $t$-cores and their relationship to class numbers. Combining this with \cite{ono19974,BKM} completely classifies the correspondence between $t$-cores and Hurwitz class numbers. To see this, note that the generating functions for $4$-cores and self-conjugate $6$- and $7$-cores are the only ones that are modular of weight $\frac{3}{2}$ (see \cite{garvan1990cranks}), agreeing with the weight of the generating function of Hurwitz class numbers \cite{zagiereis}.

\begin{corollary}\label{Cor: 6-cores}
	There is an explicit map $\phi$ taking self-conjugate $6$-cores of $n$ to binary quadratic forms of discriminant $-96n-140$. This map does not produce full Hurwitz class numbers.
\end{corollary}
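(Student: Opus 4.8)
The plan is to specialize Theorem~\ref{Thm: SoS - even t} to $t=6$ and then post-compose the resulting bijection with the classical correspondence, going back to Gauss, between representations as a sum of three squares and binary quadratic forms. Setting $t=6$, the quantity $4tn+\frac{t(t^2-1)}{6}$ becomes $24n+35$ and the number of squares is $\frac t2 = 3$, so each self-conjugate $6$-core of $n$ corresponds, up to $\sim_{BKM}$, to a solution of
\begin{equation*}
24n+35 = x_0^2 + x_1^2 + x_2^2, \qquad x_0 \equiv \pm 1,\ x_1 \equiv \pm 3,\ x_2 \equiv \pm 5 \pmod{12}.
\end{equation*}
Since $24n+35 \equiv 3 \pmod 8$, every representation of $24n+35$ as three squares has all three coordinates odd, and reducing squares modulo $24$ shows that any such representation automatically has exactly one coordinate $\equiv \pm 3 \pmod{12}$. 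The self-conjugate $6$-core congruences are strictly stronger: after quotienting by $\sim_{BKM}$ they force exactly one coordinate in \emph{each} of the three classes $\pm 1,\pm 3,\pm 5 \pmod{12}$. I will call these representations \emph{on-pattern}; this gap between ``one coordinate $\equiv \pm3$'' and the full on-pattern condition is precisely what will drive the non-fullness statement.

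Next I would send a triple $\boldsymbol{v}=(x_0,x_1,x_2)$ to the positive definite binary quadratic form obtained by restricting the standard ternary form to the rank-two lattice $\boldsymbol{v}^\perp \cap \Z^3$. For a primitive $\boldsymbol{v}$ this lattice has determinant $24n+35$, so the associated form has discriminant $-4(24n+35) = -96n-140$, matching the statement. Defining $\phi$ as the composite of the Theorem~\ref{Thm: SoS - even t} bijection with this map produces the desired assignment of binary quadratic forms of discriminant $-96n-140$ to self-conjugate $6$-cores. The routine points to verify are that $\phi$ is well defined on $\sim_{BKM}$-classes, since permutations and sign changes of the $x_j$ act on the associated forms through $\SL_2(\Z)$- and $\GL_2(\Z)$-equivalences, and that the discriminant is uniformly $-96n-140$. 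The latter requires care with imprimitive representations, which the congruence conditions constrain but do not entirely eliminate; checking that on-pattern representations relevant to the counterexample below are primitive is where the first genuine bookkeeping occurs.

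Finally, to show that $\phi$ does not produce full Hurwitz class numbers, I would compare the image of $\phi$ against all $\SL_2(\Z)$-classes of discriminant $-96n-140$ counted by $H(96n+140)$. By Gauss's three-squares formula (applicable as $24n+35 \equiv 3 \pmod 8$), the full set of representations of $24n+35$ as a sum of three squares corresponds to these classes, whereas $\phi$ retains only the on-pattern representations. The hard part is establishing strictness, i.e.\ exhibiting representations of $24n+35$ as three odd squares that are \emph{not} on-pattern and whose form classes are therefore missed. A single explicit $n$ already proves the assertion: for $n=4$ one has $24n+35 = 131$ and $-96n-140 = -524$, with the three primitive representations (up to order and sign)
\begin{equation*}
131 = 9^2+7^2+1^2 = 11^2+3^2+1^2 = 9^2+5^2+5^2,
\end{equation*}
of which only $9^2+7^2+1^2$ is on-pattern, so the two off-pattern representations $11^2+3^2+1^2$ and $9^2+5^2+5^2$ yield classes of discriminant $-524$ outside the image of $\phi$. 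For a uniform statement I would argue via genus theory for the ternary form $x_0^2+x_1^2+x_2^2$ that off-pattern representations (for instance triples with two coordinates in the same class $\pm 1$ or $\pm 5 \pmod{12}$) occur for infinitely many $n$, so the weighted count coming from self-conjugate $6$-cores is strictly smaller than $H(96n+140)$. This is the main obstacle, and it explains why the matching weight $\frac32$ of the generating function, which does yield full Hurwitz class numbers for $4$-cores and self-conjugate $7$-cores, is not by itself enough in the self-conjugate $6$-core case.
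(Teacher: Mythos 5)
Your proposal is correct and follows essentially the same route as the paper: specialize the sum-of-three-squares bijection to $t=6$ (the paper's Theorem \ref{Thm: sc6}, i.e.\ solutions of $24n+35=x^2+y^2+z^2$ with $(x,y,z)\equiv(\pm1,\pm3,\pm5)\pmod{12}$), compose with Gauss's correspondence to binary quadratic forms of discriminant $-96n-140$, and establish non-fullness by exhibiting off-pattern triples --- your $n=4$ example with $131=11^2+3^2+1^2=9^2+5^2+5^2$ matches the paper's own counterexamples. The only differences are cosmetic: you phrase Gauss's map via the lattice $\boldsymbol{v}^\perp\cap\Z^3$ where the paper uses the explicit construction of articles 278--279 of the \emph{Disquisitiones}, and you are somewhat more careful about primitivity and about why missing representations forces a deficit in the class-number count.
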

This naturally leads to the following question.
\begin{question}
Are there partition-theoretic objects that the ``missing" quadratic forms arise from in the case of $t=6$?
\end{question}

In the case of $t=9$ consider the set 
\begin{align*}
\mathscr{S}_9 \coloneqq \{ \boldsymbol{x}  \in \Z^4 \colon 9n+30 = x_0^2 +x_1^2 + x_2^2+x_3^2, \boldsymbol{x} \equiv (\pm1, \pm2,\pm3,\pm4) \pmod{9} \}.
\end{align*}
Then \cite[Theorem 10]{alpoge2014self} and Theorem \ref{Thm: SoS - odd t} imply that
\begin{align*}
27 \operatorname{sc}_9(n) & = \frac{27}{16} |\mathscr{S}_9| \\
&= \begin{cases}
\sigma(3n+10) + a_{3n+10} (36a) -a_{3n+10}(54a) - a_{3n+10}(108a)    & \text{ if } n \equiv 1,3 \pmod{4},\\
\sigma(3n+10) + a_{3n+10} (36a) -3 a_{3n+10}(54a) - a_{3n+10}(108a)    & \text{ if } n \equiv 0 \pmod{4}, \\
\sigma(k) + a_{3n+10} (36a) -3 a_{3n+10}(54a) - a_{3n+10}(108a)    & \text{ if } n \equiv 2 \pmod{4},
\end{cases}
\end{align*}
with $\sigma$ the usual sum of divisors, and where $k$ is odd and is defined by $3n+10 = 2^e k$ where $e \in \N_0$ is maximal such that $2^e \mid (3n+10)$. We use Cremona notation for elliptic curves. Here, the $a_n(E)$ are the coefficients appearing in the Dirichlet series for the $L$-function of the elliptic curve $E$. The curve $36a$ is $y^2 = x^3+1$, the curve $54a$ is $y^2+xy = x^3-x^2+12x+8$, and the curve $108a$ is $y^2 = x^3 +4$.

In the course of the paper, we also see that under our maps certain sets of solutions as representations as sums of squares naturally decompose into two sets of partitions. The following example follows immediately from Lemma \ref{Lem: SC_7 C_4}. 
\begin{example}
	 There is a bijection between $C_4(1) \cup SC_7(89)$
	and
	\begin{align*}
	\{(x,y,z) \in \Z^3 \colon x^2+y^2+z^2 = 637 \} / \sim_{BKM}.
	\end{align*}
	 It is clear that $\operatorname{c}_4(1) = 1$ since there is a singular partition of $1$, so $\operatorname{sc}_7(89) = 3$. Let $H(|D|)$ be the Hurwitz class number that counts the number of equivalence classes of positive definite integral binary quadratic forms of discriminant $-D$, and $H_7(|D|)$ the class number that counts the number of equivalence classes of $7$-primitive positive definite integral binary quadratic forms of discriminant $-D$. Then using \cite{BKM} we have
	\begin{align*}
	 \frac{1}{2} H(52)+\frac{1}{4} H_7(2548)  = \frac{1}{48} \Big| \{(x,y,z) \in \Z^3 \colon x^2+y^2+z^2 = 637 \} \Big|,
	\end{align*}
	implying that $H_7(2548) = 12$ and $H(52) = 2$. 
\end{example}
Such considerations also yield inequalities of sets of partitions. Example \ref{Example: sc6 sc7} shows that $\operatorname{sc}_6(7n) \leq \operatorname{sc}_7(24n+3)$, naturally leading to the following question, for example.
\begin{question}
	Is there a way to explicitly realize this inequality as an injection of self-conjugate $6$-cores of $7n$ into self-conjugate $7$-cores of $24n+3$?
\end{question}

Finally, we obtain an explicit map on abaci of partitions to combinatorially prove the equality
\begin{align}\label{Eqn: curious eqn}
\operatorname{c}_4(7n+2) = 2\operatorname{sc}_7(8n+1)
\end{align}
for $n \not\equiv 4 \pmod{7}$ and $56n+21$ square-free, which was shown via class numbers in \cite{BKM}. 
\begin{theorem}
	When $n \not\equiv 4\pmod{7}$, there is an explicit $2$-to-$1$ map $\varphi$ from abaci of $4$-cores of $7n+2$ to abaci of self-conjugate $7$-cores to $8n+1$. The map is invariant under conjugation.
\end{theorem}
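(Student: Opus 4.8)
The plan is to realize $\varphi$ at the level of the Garvan--Kim--Stanton $N$-codings, where the identity \eqref{Eqn: curious eqn} becomes transparent. First I would apply Theorem~\ref{Thm: alt main t-cores} with $t=4$ and core-size $7n+2$, identifying $C_4(7n+2)$ with the tuples $(w_0,w_1,w_2,w_3)\in\Z^4$ satisfying $\sum_k w_k=0$, $w_k\equiv 2k-3\pmod 8$, and $\sum_k w_k^2 = 224n+84$. In parallel, specializing Theorem~\ref{Thm: alt main t-cores} to $t=7$ and imposing the self-conjugacy symmetry $w_k=-w_{6-k}$ (forcing $w_3=0$) collapses the seven coordinates to three and, via Theorem~\ref{Thm: alt SoS}, identifies $SC_7(8n+1)$ with the triples $(v_0,v_1,v_2)\in\Z^3$ satisfying $v_k\equiv 2k-6\pmod{14}$ and $\sum_k v_k^2 = 224n+84$. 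The crucial observation is that both families are now encoded as representations of the \emph{same} integer $224n+84$.

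The map itself is the elementary substitution $\varphi\colon (w_0,w_1,w_2,w_3)\mapsto (w_0+w_1,\,w_0+w_2,\,w_0+w_3)=:(x,y,z)$. Since $\sum_k w_k=0$ one checks directly that $x^2+y^2+z^2=\sum_k w_k^2$, so the image again represents $224n+84$; moreover every $w_k$ is odd, so $x,y,z$ are all even, as required of a Theorem~\ref{Thm: alt SoS} triple. To invert, note $x+y+z=2w_0$, so $a:=(x+y+z)/2=w_0$ recovers $(w_0,w_1,w_2,w_3)=(a,\,x-a,\,y-a,\,z-a)$, which automatically sums to zero and has the correct sum of squares. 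The sign freedom in one coordinate will be responsible for the two-to-one behaviour.

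The heart of the argument is to verify that the image always meets the congruence conditions of Theorem~\ref{Thm: alt SoS}, i.e. that $x,y,z$ are even with $\{x/2,y/2,z/2\}\equiv\{\pm1,\pm2,\pm3\}\pmod 7$. Here I would reduce $\sum_k w_k^2=224n+84\equiv 0\pmod 7$ to $(x/2)^2+(y/2)^2+(z/2)^2\equiv 0\pmod 7$, and then use that the nonzero squares modulo $7$ are $\{1,2,4\}$: the only multisets of three squares summing to $0$ modulo $7$ are $\{0,0,0\}$ and $\{1,2,4\}$, the latter forcing exactly $\{x/2,y/2,z/2\}\equiv\{\pm1,\pm2,\pm3\}$. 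The degenerate case $\{0,0,0\}\pmod 7$ forces $49\mid 56n+21$, equivalently $n\equiv 4\pmod 7$, and is thus excluded by hypothesis; a short computation shows this is the \emph{same} congruence under which self-conjugate $4$-cores of $7n+2$ exist, since their $N$-codings give size $\equiv 30\pmod{49}$. Consequently, precisely when $n\not\equiv 4\pmod 7$, the conjugation involution $(w_0,w_1,w_2,w_3)\mapsto(-w_3,-w_2,-w_1,-w_0)$ on $4$-cores is fixed-point free, while $\varphi$ sends $\lambda$ and its conjugate $\lambda'$ to $(x,y,z)$ and $(x,y,-z)$, which are $\sim_{BKM}$-equivalent and hence determine the same self-conjugate $7$-core. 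This simultaneously yields the conjugation-invariance and the exact two-to-one count.

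The main obstacle I anticipate is the final bookkeeping for surjectivity and for the mod $8$ congruences: starting from a legitimate self-conjugate $7$-core triple, one must show that among the sign and permutation choices permitted by $\sim_{BKM}$ there is a lift $(a,x-a,y-a,z-a)$ satisfying $w_k\equiv 2k-3\pmod 8$, and that exactly two such lifts occur, interchanged by $z\mapsto -z$. This is a finite check modulo $8$, but organizing it cleanly—and then transporting the entire construction back through the $N$-coding dictionary so that $\varphi$ is genuinely described on abaci rather than on codings, with the conjugation-invariance manifest—is where the real care is needed.
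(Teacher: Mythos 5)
Your proposal is correct in outline but takes a genuinely different route from the paper. The paper defines $\varphi$ as the composition $\rho^{-1}\circ p\circ\psi$, where $\psi\colon C_4(7n+2)\to K^{OS}(56n+21)$ is Ono--Sze's bijection, $\rho\colon SC_7(8n+1)\to K^{BKM}(56n+21)$ is the Bringmann--Kane--Males bijection, and $p$ is the natural projection from $\sim_{OS}$-classes to $\sim_{BKM}$-classes; the $2$-to-$1$ count then comes for free from the fact that $p$ identifies exactly the pairs $(x,y,z)$ and $(x,y,-z)$ with no zero coordinate, and conjugation-invariance is checked case-by-case against Proposition 3 of \cite{ono19974}. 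You instead work directly with the $N$-coding parametrizations of Theorems~\ref{Thm: alt main t-cores} and~\ref{Thm: alt SoS}, which is precisely the rewrite the paper's own Remark after Theorem~\ref{combinatorialmap} says is possible but declines to carry out. Your key identities all check out: both sides sit over $224n+84=4(56n+21)$; the linear map $(w_0,w_1,w_2,w_3)\mapsto(w_0+w_1,w_0+w_2,w_0+w_3)$ preserves the sum of squares because $\sum_k w_k=0$; conjugation acts as $w_k\mapsto -w_{3-k}$ and hence sends $(x,y,z)$ to $(x,y,-z)$; and the mod-$7$ classification of three squares summing to $0$ (only $\{0,0,0\}$ and $\{1,2,4\}$) correctly isolates $n\equiv 4\pmod 7$ as the degenerate case. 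Your route makes conjugation-invariance essentially a one-line computation, which is cleaner than the paper's case check.

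What your approach does \emph{not} get for free is the exact $2$-to-$1$ count and surjectivity: the paper obtains these by citing the known bijectivity of $\psi$ and $\rho$, whereas you must show directly that each admissible triple $(v_0,v_1,v_2)$ with $v_k\equiv 2k-6\pmod{14}$ has exactly two lifts $(a,x-a,y-a,z-a)$ satisfying $w_k\equiv 2k-3\pmod 8$, interchanged by $z\mapsto -z$. This is the finite check you flag as the ``main obstacle,'' and it is genuinely the crux of the statement rather than bookkeeping; it does go through (the condition $56n+21\equiv 5\pmod 8$ forces exactly one odd, one $\equiv 2$, and one $\equiv 0$ coordinate modulo $4$ among $x/2,y/2,z/2$, which pins down the admissible signed permutations), but as written your argument only establishes that the map is at least $2$-to-$1$ on conjugate pairs and well defined. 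Carrying out that check, or alternatively quoting the bijectivity of $\psi$ and $\rho$ as the paper does, is needed to close the proof.
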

While the map will be proved to be $2$-to-$1$ by rewriting maps previously found in \cite{BKM} and \cite{ono19974}, the definition of the map only requires the abacus of the $4$-core. Curiously, the numbers that arise in the definition of this map are directly related to hook lengths of the respective $t$-cores. The reasoning behind this will become clearer in the definition of the map of Theorem~\ref{Thm: alt main t-cores}. 

\subsubsection*{Outline}
In Section \ref{Sec: prelims} we gather preliminary results needed for the rest of the paper. In Section \ref{Sec: SoS} we prove our main theorems of the bijections between families of $t$-cores and sums of squares, and provide examples. Section \ref{Sec: sets of solutions} is dedicated to investigating the decomposition of certain sets of sums of squares into $t$-cores. Finally, in Section \ref{Section: explicit map} we describe the explicit map between $C_4(7n+2)$ and $SC_7(8n+1)$.

\subsubsection*{Acknowledgments}
The authors would like to thank Larry Rolen for helpful discussions and comments on an earlier version of this paper which improved its exposition. We also thank the referees for their valuable comments which improved the paper and for pointing out an error in a previous version of the manuscript. The research of the first author conducted for this paper is supported by the Pacific Institute for the Mathematical Sciences (PIMS). The research and findings may not reflect those of the Institute.
\section{Preliminaries}\label{Sec: prelims}

\subsection{Abaci and $N$-codings}\label{Section: abaci, t-residue, lists}

We next describe the \begin{it}$t$-abacus\end{it} associated to a partition $\lambda$. This 
consists of $s$ beads on $t$ rods constructed in the following way (for more background see \cite{ono19974}). For every $1 \leq j \leq s$ define \emph{structure numbers} by
\begin{equation*}
B_j \coloneqq \lambda_j - j+s.
\end{equation*}
For each $B_j$ there are unique integers $(r_j,c_j)$ such that
\begin{equation*}
B_j = t(r_j-1) +c_j,
\end{equation*}
and $0 \leq c_j < t-1$. The \emph{abacus} for the partition $\lambda$ is then formed by placing one bead for each $B_j$ in row $r_j$ and column $c_j$. Using this construction, \cite[Theorem 4]{ono19974} reads as follows.

\begin{theorem}\label{thm:t-coreabacus}
	Let $A$ be an abacus for a partition $\lambda$, and let $m_j$ denote the number of beads in column $j$. Then $\lambda$ is a $t$-core partition if and only if the $m_j$ beads in column $j$ are the beads in positions
	$
	(1,j), (2,j), \dots, (m_j,j).
	$
\end{theorem}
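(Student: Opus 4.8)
The plan is to reduce the $t$-core condition to a statement about the beta-set (first-column hook lengths) and then to read the geometry of the abacus directly off it. First I would record the dictionary between the structure numbers and hook lengths: since the number of rows of $\lambda$ equals $\lambda_1' = s$, the first-column hook lengths are $h(j,1) = \lambda_j + \lambda_1' - j = \lambda_j + s - j = B_j$, so that $B \coloneqq \{B_1, \dots, B_s\}$ is precisely the beta-set of $\lambda$, a set of $s$ distinct nonnegative integers. By the construction recalled before the theorem, a position $p \in \Z_{\geq 0}$ carries a bead exactly when $p \in B$, and writing $p = t(r-1) + c$ with $c$ the least nonnegative residue of $p$ modulo $t$ places it in row $r$, column $c$. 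In particular, two positions lie in the same column if and only if they are congruent modulo $t$, and within a fixed column the row is an increasing function of the position.

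The key step is the classical description of all hook lengths in terms of the beta-set: the multiset of hook lengths of $\lambda$ equals $\{\, b - a : b \in B,\ a \in \Z_{\geq 0}\setminus B,\ a < b \,\}$. I would establish this through the standard cell-counting bijection, namely that the cell $(j,k)$ of the Ferrers--Young diagram with hook length $h$ corresponds to the pair $(b,a) = (B_j,\, B_j - h)$, where one checks that $a \notin B$, that $0 \le a < b$, and that this assignment ranges bijectively over all such pairs (alternatively one may cite James--Kerber/Olsson). Restricting to hook lengths divisible by $t$: the conditions $t \mid (b-a)$ with $b \in B$, $a \notin B$, and $0 \le a < b$ say exactly that $a$ and $b$ share the same residue modulo $t$, hence occupy the same column, while $a < b$ together with $a \geq 0$ forces the row of $a$ to be strictly smaller than that of $b$ and at least $1$. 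Thus $\lambda$ possesses a hook length divisible by $t$ if and only if some column contains an empty position strictly above an occupied one.

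Negating this equivalence yields the theorem. The partition $\lambda$ is a $t$-core precisely when no hook length is divisible by $t$, which by the previous paragraph holds if and only if, for every bead at a position $(r,j)$, every position $(r',j)$ with $1 \le r' < r$ is also occupied. Equivalently, the set of occupied rows in column $j$ is downward closed, i.e. equals $\{1, 2, \dots, m_j\}$, so the $m_j$ beads in column $j$ sit exactly in positions $(1,j), (2,j), \dots, (m_j,j)$. Reversing the implication gives the converse: if every column is filled from the top with no gaps, then no bead admits an empty position above it in its column, so there is no hook length divisible by $t$ and $\lambda$ is a $t$-core.

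I expect the main obstacle to be the second step, that is, pinning down the hook-length/beta-set correspondence with the correct multiplicities and verifying that the pair $(B_j, B_j - h)$ really does parametrize all pairs $(b,a)$ with $b \in B$, $a \notin B$, and $a < b$ bijectively. Once this dictionary is secured, the passage to ``same column, strictly higher, and empty'' is a short congruence computation, and the equivalence with consecutive beads filling each column from the top is then purely formal.
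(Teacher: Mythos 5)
Your proof is correct, but note that the paper contains no proof of this statement to compare against: it is quoted as Theorem 4 of Ono--Sze \cite{ono19974}, so the paper's ``proof'' is a citation. Your route is the standard self-contained beta-set argument, and it holds up in detail: the identification $h(j,1)=\lambda_j+\lambda_1'-j=B_j$ is right (granting the usual normalization $\lambda_1'=s$, i.e.\ all listed parts positive -- harmless, since padding with zero parts shifts $B$ by $1$ and inserts $0$, leaving the difference set $\{b-a: b\in B,\ a\notin B,\ 0\le a<b\}$ unchanged); your bijection $(j,k)\mapsto (B_j,\,B_j-h(j,k))$ does work, since $B_j-h(j,k)=s-\lambda_k'+k-1$, which one checks avoids $B$ (split into $i\le \lambda_k'$, where $(i,k)$ is a cell with positive hook length, and $i>\lambda_k'$, where $\lambda_i<k$), is strictly increasing in $k$, and the count $B_j-\#\{i:B_i<B_j\}=\lambda_j$ gives bijectivity; and the translation of $t\mid(b-a)$ into ``same column, empty position strictly above a bead'' is exactly the congruence dictionary $p=t(r-1)+c$ (the paper's ``$0\le c_j<t-1$'' is a typo for $0\le c_j\le t-1$, which your reading correctly assumes). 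By contrast, the source proof in \cite{ono19974} runs through the James--Kerber rim-hook formalism: removing a $t$-rim-hook corresponds to sliding a bead up one position in its column, and $\lambda$ is a $t$-core iff no bead can slide, forcing each column to be flush at the top. The two arguments are close cousins, but yours buys a direct equivalence that never needs the intermediate fact that a hook length divisible by $t$ exists iff a hook of length exactly $t$ exists (which the rim-hook route must supply), while the bead-sliding version buys more: it is dynamic, computing the $t$-core of an arbitrary partition by repeated bead moves, which is what makes the abacus useful beyond this characterization.
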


This means that the abacus for a $t$-core partitions may be represented by $(a_1,a_2, \dots, a_t)$. Furthermore, a direct generalisation of a result of Ono and Sze \cite{ono19974} yields that $C_t(n)$ is in one-to-one correspondence with all abaci of the shape $(0,a_1, \dots, a_{t-1})$. 

\begin{example}
	To demonstrate this for clarity for the reader, we borrow the example of $4$-cores from \cite[page 8]{ono19974}. Each $4$-core may be represented by a $4$-tuple, which after applying \cite[Lemma 1]{ono19974} repeatedly can be written in the form $(0,a_1,a_2,a_3)$. For an abacus of this shape, the bead in the upper-left hand corner naturally corresponds to the smallest part in the partition. In this case, it is clearly $1,2$ or $3$ as these are the only possible values represented by beads in the position $(1,1)$, $(1,2)$ or $(1,3)$. Then, as Ono and Sze note, it is clear that there is a unique abacus of this shape for each $4$-core.
\end{example}

The {\it extended $t$-residue diagram} associated to a $t$-core partition $\lambda$ is constructed as follows (see \cite[page 3]{garvan1990cranks}).
Label a cell in the $j$-th row and $k$-th column of the Ferrers--Young diagram of $\lambda$ by $k-j \pmod{t}$. We also label the cells in column $0$ in the same way. A cell is called {\it exposed} if it is at the end of a row. The {\it region $r$} of the extended $t$-residue diagram of $\lambda$ is the set of cells $(j,k)$ satisfying $t(r-1) \leq k-j < tr$. Then we define $n_j$ to be the maximum region of $\lambda$ which contains an exposed cell labeled $j$. As noted in \cite{garvan1990cranks}, this is well-defined since column $0$ contains infinitely many exposed cells. Using extended $t$-residue diagrams, the authors of \cite{garvan1990cranks} showed the following result. 

\begin{lemma}[Bijection 2 of \cite{garvan1990cranks}]\label{Lemma: Garvan size of lists}
	Let $C_t(n)$ be the set of $t$-core partitions of $n$. There is a bijection $C_t(n) \rightarrow \{ N \coloneqq [n_0, \dots, n_{t-1}] \colon n_j \in \Z, n_0 + \dots + n_{t-1} = 0 \}$ such that
	\begin{equation*}
	|\lambda| = \frac{t|N|^2}{2} + B \cdot N, \hspace{20pt} B \coloneqq [0,1, \dots, t-1].
	\end{equation*}
	When computing the norm and dot-product, we consider $N,B$ as elements in $\Z^t$.
\end{lemma}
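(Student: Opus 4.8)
The plan is to pass from the extended $t$-residue diagram to the language of beta-numbers and the abacus of Theorem~\ref{thm:t-coreabacus}, where the $t$-core condition becomes transparent. First I would record the elementary dictionary between exposed cells and the integers $\lambda_j - j$. Adopting the convention that $\lambda$ has infinitely many parts (with $\lambda_j = 0$ for $j$ exceeding the number of rows), the rightmost cell of row $j$ is exposed, carries the label $\lambda_j - j \pmod{t}$, and lies in the region $r$ determined by $t(r-1) \le \lambda_j - j < tr$. Thus the exposed cell of row $j$ encodes exactly the integer $\lambda_j - j$, and the multiset $\{\lambda_j - j : j \ge 1\}$ is precisely the set of bead positions of the abacus, distributed onto rod $i$ exactly when $\lambda_j - j \equiv i \pmod{t}$. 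One checks directly that the empty partition produces the all-zero vector, which fixes the normalization.

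With this dictionary, by Theorem~\ref{thm:t-coreabacus} the partition $\lambda$ is a $t$-core if and only if, on each rod, the beads occupy an initial downward segment with no gaps. Translated back, this says that for each label $i$ the exposed cells labeled $i$ fill every region from $-\infty$ up to some maximal region, which is by definition $n_i$. Hence a $t$-core is completely determined by the vector $N = [n_0, \dots, n_{t-1}]$: given any such vector one fills each rod up to height $n_i$ and reads off the unique partition, which supplies the inverse map and therefore the bijection. The constraint $n_0 + \cdots + n_{t-1} = 0$ is the assertion that the bead configuration is a charge-zero modification of the empty partition; summing the per-rod charges (normalized as above so that the empty partition gives the zero vector) yields $0$.

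Finally, for the size formula I would compute $|\lambda|$ rod by rod. Moving the top bead of rod $i$ from its ground-state region $0$ to region $n_i$ changes the cell count by a quantity quadratic in $n_i$; summing these independent contributions over the $t$ rods gives $|\lambda| = \frac{t}{2}\sum_i n_i^2 + \sum_i i\,n_i = \frac{t}{2}|N|^2 + B \cdot N$ with $B = [0,1,\dots,t-1]$, which one can cross-check on small cases (for $t = 2$ the formula returns the triangular numbers, matching the staircase $2$-cores). The main obstacle is not a single hard idea but the careful bookkeeping of offsets: one must pin down exactly how the region index, the floor functions implicit in $t(r-1) \le \lambda_j - j < tr$, and the residue labeling interact, so that the linear term emerges as $B = [0,1,\dots,t-1]$ and the charges sum to zero rather than to a shifted constant. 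Getting these normalizations consistent is the delicate part; once they are fixed, both the sum-zero condition and the quadratic size formula follow from the per-rod computation.
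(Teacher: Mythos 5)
The paper does not actually prove this lemma; it is imported verbatim as Bijection 2 of Garvan--Kim--Stanton, so there is no internal proof to compare against. Your plan is the standard argument for that result, recast in the language of beta-numbers/Maya diagrams, and its skeleton (exposed cells correspond to the integers $\lambda_j-j$; $t$-core $\Leftrightarrow$ each rod is an initial segment; charge zero $\Leftrightarrow\sum_i n_i=0$; size computed rod by rod) is correct and would yield the lemma. Two bookkeeping points, however, need more than the acknowledgment you give them. First, Theorem~\ref{thm:t-coreabacus} as stated concerns the finite abacus built from the structure numbers $B_j=\lambda_j-j+s$, whereas your beads sit at $\lambda_j-j$; the shift by $s$ cyclically permutes the residue classes modulo $t$ whenever $t\nmid s$, so you cannot quote that theorem verbatim for your normalization --- you need the (equally standard, but here unproved) statement that $\lambda$ is a $t$-core if and only if the doubly-infinite bead set $\{\lambda_j-j\}$ is closed under subtraction of $t$. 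This shift is exactly what the paper's Proposition~\ref{Proposition: list to abacus} is there to handle. Second, the per-rod contribution to $|\lambda|$ is not $\frac{t}{2}n_i^2+i\,n_i$ as you assert: relative to the empty partition, rod $i$ gains the beads $i,\,i+t,\dots,\,i+t(n_i-1)$ when $n_i>0$ (or loses the mirror-image beads when $n_i<0$), a bead at position $b\ge 0$ contributes $b+1$ cells and a removed bead at $b<0$ contributes $-b-1$, so the rod-$i$ total is
\begin{equation*}
\sum_{k=0}^{n_i-1}\bigl(i+tk+1\bigr)=\frac{t}{2}n_i^2+\Bigl(i+1-\frac{t}{2}\Bigr)n_i ,
\end{equation*}
and the stated formula with $B=[0,1,\dots,t-1]$ only emerges after summing over $i$ and using $\sum_i n_i=0$ to annihilate the extra term $\bigl(1-\frac{t}{2}\bigr)\sum_i n_i$. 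You correctly identify these normalizations as ``the delicate part'' but leave them undone, so what you have is a sound outline of the right proof rather than a complete one; carrying out the two computations above closes it.
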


\begin{example}
	As a brief example of this bijection, consider the set $C_2(n)$. Then we obtain a bijection between
	\begin{align*}
		C_2(n) \rightarrow \left\{  n_0, n_1 \in \Z \colon  n_0 = -n_1  \right\}
	\end{align*}
such that the partition $\lambda$ has size
\begin{align*}
		|\lambda| = n_0^2 +n_1^2 + n_1.
\end{align*}
\end{example}
Following Han \cite{Han2010}, we call the list $[n_0, \dots, n_{t-1}]$ the \textit{$N$-coding} associated to $\lambda$. We will utilize the fact that if $\lambda$ has $N$-coding $[n_0, \dots, n_{t-1}]$, then its conjugate has $N$-coding $[-n_{t-1}, \dots, -n_0]$ (see the proof of Bijection 2 in \cite{garvan1990cranks}). This means that an alternative characterization of self-conjugate $t$-cores is that they satisfy the equation $n_k = -n_{t-k-1}$ for $0 \le k \le t-1$. 

In previous work by Ono and Sze \cite{ono19974} and by Bringmann, Kane, and the first author \cite{BKM}, explicit bijections from $t$-cores and self-conjugate $t$-cores were given. Before we state these maps, we define 
\begin{align*}
	K^{OS}(n)&:= \{ (x,y,z) \in \Z^3 : x^2 + y^2 + z^2 = n \}/\sim_{OS} \quad \text{and}\\
	K^{BKM}(n)&:= \{ (x,y,z) \in \Z^3 : x^2 + y^2 + z^2 = n \}/\sim_{BKM},
\end{align*}
where two triples are equivalent under $\sim_{OS}$ if they are equal up to reordering the terms and up to two simultaneous sign changes and where two triples are equivalent under $\sim_{BKM}$ if they are equal up to reordering the terms and any number of sign changes. For example, $(x,y,z) \not\sim_{OS} (-x,y,z)$ if $x \neq 0$, while $(x,y,z) \sim_{BKM} (-x,y,z)$.\\
\indent We now recall that Ono and Sze define the partitions $I(g,C,D)$, $II(g,C,D)$, and $III(g,C,D)$ to be the partitions with abaci $(0,g,C+g,D+g)$, $(0,D+g+1, g, D+g)$, and $(0,C+g+1,D+g+1,g)$ respectively for $g,C,D\ge 0$. This describes all possible abaci of $4$-cores, so the bijection $\psi\colon C_4(n) \to K^{OS}(8n+5)$ can be defined by Table \ref{PsiOnoSze}.
\begin{center}
	\begin{table}[H]
		\begin{tabular}{c c c}
			Type of Partition & Shape of Abaci \\
			I(g,C,D) & $(2C - 2D - 2g - 1, 2C-2D+2g, 2C+2D+2g+2)$ \\
			II(g,C,D)& $(2C+2D+2g+3, 2C-2D+2g, 2C-2D-2g-2)$ \\
			III(g,C,D) & $(2C-2D+2g+1, 2C+2D+2g+4, 2C-2D-2g-2)$ \\
		\end{tabular}
		\caption{\label{PsiOnoSze} The different types of abaci for $4$-cores.}
	\end{table}
\end{center}

Similarly, Bringmann, Kane, and the first author classified self-conjugate $7$-cores and give a bijection $\rho\colon SC_7(n) \to K^{BKM}(7n+14)$ as follows:
\begin{center}
	\begin{table}[H]
		\begin{tabular}{c c c}
			Type of Partition & Shape of Abaci & Element of $K^{BKM}(7n+14)$\\
			I & $(0,a,b,r,2r-b,2r-a,2r)$ & $(7r+3, 7r+2-7a, 7r+1-7b)$\\
			II & $(0,2r+1, a,b,r,2r-b,2r-a)$ & $(7r+4,7r+2-7a,7r+1-7b)$\\
			III & $(0,a,2r+1-a,2r+1,b,r,2r-b)$ & $(7r+5,7r+4-7a,7r+1-7b)$\\
			IV & $(0,a,b,2r+1-b,2r+1-a,2r+1,r)$ & $(7r+6,7r+5-7a,7r+4-7b)$\\
			V & $(0,r+1,2r+2,a,b,2r+1-b,2r+1-a)$ & $(7r+8,7r+5-7a,7r+4-7b)$\\
			VI & $(0,a,r+1,2r+2-a,2r+2,2r+1-b)$ & $(7r+9,7r+8-7a,7r+4-7b)$
		\end{tabular}
		\caption{\label{Table: map of BKM} The different types of abaci for self-conjugate $7$-cores and their image under $\rho$.}
	\end{table}
\end{center}

We also make extensive use of the following result, which is \cite[Proposition 4.3]{BKM}.
\begin{proposition}\label{Proposition: list to abacus}
	Let $N = [n_0,\dots, n_{t-1}]$ be the list associated to the extended $t$-residue diagram of a $t$-core partition $\Lambda$. Let $\ell + s = \alpha_\ell t + \beta_\ell$ with $0 \leq \beta_\ell \leq t-1$.
	Then $N$ also uniquely represents the abacus 
	$
	( \dots,n_{t-1} +\alpha_{t-1}, n_0 + \alpha_0, n_1 + \alpha_1, \dots ),
	$
	where $n_\ell+\alpha_\ell$ occurs in position $\beta_\ell$ of the abacus.
\end{proposition}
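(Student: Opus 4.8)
The plan is to read off both encodings directly from the partition $\Lambda$ and match them rod by rod, the bridge being the structure numbers $B_j = \lambda_j - j + s$. First I would record the two facts I need. By Theorem~\ref{thm:t-coreabacus}, for a $t$-core the beads on rod $c$ occupy exactly the positions $c, c+t, \dots, c+(m_c-1)t$, where $m_c$ is the number of beads on rod $c$, so the lowest bead present sits at $B = c + (m_c-1)t$ and the first empty slot at $B = c + m_c t$. On the residue-diagram side, the cell ending row $j$ is $(j,\lambda_j)$, whose label is $\lambda_j - j \equiv B_j - s \pmod t$ and which lies in region $\lfloor (\lambda_j-j)/t\rfloor + 1 = \lfloor (B_j - s)/t\rfloor + 1$.

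Next I would translate the $N$-coding into abacus data. The previous paragraph shows that an exposed cell is labeled $\ell$ precisely when the corresponding bead lies on rod $c := (\ell+s) \bmod t$, and that a bead at position $B$ on that rod contributes an exposed cell in region $\lfloor (B-s)/t\rfloor + 1$, a quantity nondecreasing in $B$ and strictly increasing as one moves down a fixed rod. Since $n_\ell$ is by definition the largest region carrying an exposed cell labeled $\ell$, it is attained at the lowest bead on rod $c$; passing to the first empty slot gives the uniform formula $n_\ell = \lfloor (c + m_c t - s)/t\rfloor$. This single expression also covers the case $m_c = 0$, where the maximal exposed cell labeled $\ell$ comes from the column-$0$ cells built into the extended $t$-residue diagram and the first empty slot $c + m_c t = c$ produces exactly this value. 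This is the one step requiring care: I must check that the infinitely many exposed cells coming from the empty rows (the positions $B<0$) lie in regions tending to $-\infty$ and hence never supply the maximum, so that $n_\ell$ is always governed by the boundary between beads and empty slots on rod $c$.

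With the formula in hand the remainder is routine. Writing $c = (\ell+s)\bmod t = \ell + s - t\alpha_\ell$ and recalling $\beta_\ell = c$, I get $c + m_c t - s = (m_c - \alpha_\ell)t + \ell$ with $0 \le \ell \le t-1$, so $\lfloor (c+m_ct-s)/t\rfloor = m_c - \alpha_\ell$, that is $n_\ell + \alpha_\ell = m_{\beta_\ell}$, the number of beads on rod $\beta_\ell$. Finally, since $\ell \mapsto \beta_\ell$ is a bijection of $\{0,\dots,t-1\}$, the list $(n_\ell + \alpha_\ell)_\ell$ assigns each rod its bead count exactly once, which is precisely the abacus $(\dots, n_{t-1}+\alpha_{t-1}, n_0 + \alpha_0, n_1+\alpha_1, \dots)$ with $n_\ell+\alpha_\ell$ in position $\beta_\ell$, and the bijectivity of $\ell \mapsto \beta_\ell$ yields the asserted uniqueness. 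I expect the main obstacle to be the bookkeeping of the maximal-region step together with the index shift $\alpha_\ell$; once the boundary interpretation of $n_\ell$ is pinned down, everything else is elementary algebra.
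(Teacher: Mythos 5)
Your proof is correct. Note that the paper itself does not prove this proposition---it is imported verbatim as \cite[Proposition 4.3]{BKM}---so the comparison is against the cited argument, which proceeds by the same natural route you chose: translating both encodings through the structure numbers $B_j=\lambda_j-j+s$. Your two pillars are exactly right: Theorem~\ref{thm:t-coreabacus} forces the beads on rod $c$ into the initial segment $c, c+t,\dots,c+(m_c-1)t$, and the exposed cell $(j,\lambda_j)$ has label $B_j-s \pmod{t}$ and region $\lfloor (B_j-s)/t\rfloor+1$. You also correctly isolated and resolved the one delicate point, namely that the column-$0$ exposed cells correspond to the fictitious positions $B=s-j<0$ and occupy regions tending to $-\infty$, so that the uniform first-empty-slot formula $n_\ell=\lfloor (c+m_ct-s)/t\rfloor$ with $c=(\ell+s)\bmod t$ holds even when $m_c=0$; from there the identity $c+m_ct-s=(m_c-\alpha_\ell)t+\ell$ with $0\le \ell\le t-1$ yields $m_{\beta_\ell}=n_\ell+\alpha_\ell$, and the bijectivity of $\ell\mapsto\beta_\ell$ gives the claimed placement and uniqueness. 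As a confirmation, your formula reproduces the paper's worked example: for $t=4$, $\Lambda=(3,2,1)$, one has $s=3$, $(m_0,m_1,m_2,m_3)=(0,2,0,1)$, giving $N=[1,-1,1,-1]$ and the abacus $(0,2,0,1)$, in agreement with the example following the proposition.
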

\begin{example}
	As a small example of Proposition \ref{Proposition: list to abacus}, we borrow the example of \cite{BKM}. Let $t=4$ and construct the abacus and $4$-residue diagram for the partition $\Lambda = (3,2,1)$. 
	We begin with the abacus, computing the structure numbers $B_1 = 5$, $B_2 = 3$, and $B_3 = 1$. Then diagrammatically the abacus is 
	\begin{equation*}
		\begin{matrix}
			{}\vphantom{\begin{smallmatrix}a\\ a\end{smallmatrix}} & \text{\large{$0$}} &  \text{\large{$1$}} & \text{\large{$2$}} &\text{\large{$3$}}  \\
			\text{\large{$1$}} \vphantom{\begin{matrix}a\\ a\end{matrix}}& {} & \text{\large{$B_3$}} & {} & \text{\large{$B_2$}}   \\
			\text{\large{$2$}}\vphantom{\begin{matrix}a\\ a\end{matrix}} & {} & \text{\large{$B_1$}} &  & 	
		\end{matrix}
	\end{equation*}
	The extended $4$-residue diagram of the partition is
	\begin{equation*}
		\begin{matrix}
			{} & \text{\large{$0$}}\vphantom{\begin{smallmatrix}a\\ a\end{smallmatrix}} & \text{\large{$1$}} & \text{\large{$2$}} & \text{\large{$3$}} \\
			\text{\large{$1$}}\vphantom{\begin{matrix}a\\ a\end{matrix}} & {}_3 & \text{\large $\bullet$}_0 & \text{\large $\bullet$}_1 & \text{\large $\bullet$}_2  \\
			\text{\large{$2$}}\vphantom{\begin{matrix}a\\ a\end{matrix}}  & {}_2 & \text{\large $\bullet$}_3 & \text{\large $\bullet$}_0 & {} 	\\
			\text{\large{$3$}}\vphantom{\begin{matrix}a\\ a\end{matrix}} & {}_1 & \text{\large $\bullet$}_2 & {} & 	{} \\
		\end{matrix}
	\end{equation*}
	Then the exposed cells in this diagram are $(1,3)$, $(2,2)$, and $(3,1)$. One may obtain the elements of the list as $n_0 = n_2 =1$ and $n_1=n_3 = -1$ (where the final two values arise from exposed cells in column $0$). Taking this list, an application of Proposition \ref{Proposition: list to abacus} yields the abacus $(0,2,0,1)$, which is precisely the one obtained diagramatically above.
\end{example}

\section{Sums of squares}\label{Sec: SoS}
\subsection{Generic $t$-cores}
In this section we begin by proving Theorem \ref{Thm: main t-cores}. 

\begin{proof}[Proof of Theorem \ref{Thm: main t-cores}]
	Begin by considering the first family of partitions. That is, let $s = tr$. The abacus of a $t$-core partition $\lambda$ is given by 
	\begin{align*}
	(0,a_1,\dots, a_{t-1}),
	\end{align*} 
	where $\sum_j a_j = tr$, and by Proposition \ref{Proposition: list to abacus}, the shape of the $N$-coding associated to $\lambda$ is
	\begin{align*}
	[-r,a_1-r,a_2-r, \dots, a_{t-1}-r].
	\end{align*}
	Using that the sum of the elements in the $N$-coding vanishes by Lemma~\ref{Lemma: Garvan size of lists}, we rewrite $a_{t-1} = tr-\sum_{j=1}^{t-2}a_j$. Lemma \ref{Lemma: Garvan size of lists} relates the size of the partition and the $N$-coding by
	\begin{multline*}
	n = \frac{t}{2}\left( (t-1)r^2 + \sum_{j=1}^{t-2}a_j^2-2a_jr + \left((t-1)r - \sum_{j=1}^{t-2}a_j\right)^2 \right) + \sum_{j=1}^{t-2} j(a_j-r) +(t-1)\left(tr-\sum_{j=1}^{t-2} a_j\right).
	\end{multline*}
	It is not difficult to see that we thus have
	\begin{align*}
	2tn = \sum_{j=1}^{t-2} (tr-ta_j + (t-1 -j))^2 + (tr+2(t-1))^2 - 3(t-1)^2 + \left( (t-1)tr - t\sum_{j=1}^{t-2} a_j \right)^2 - \sum_{j=1}^{t-1}j^2.
	\end{align*}
	
	Identifying the final term as $\frac{t}{6}(t-1)(2t-1)$, we therefore obtain that the partition $\lambda$ is a $t$-core only if $2tn+\frac{t}{6}(t-1)(2t-1) + 3(t-1)^2= \sum_{j=0}^{t-1} x_j^2$ with $x_j \equiv j \pmod{t}$. We also see that $\sum_{j=0}^{t-1}x_j= \frac{t(t-1)}{2}$. A similar calculation holds for other choices of $s \pmod{t}$. Noting that under sign changes of $x_j$ and relabelling of variables we still obtain a representation as a sum of squares, we obtain the statement of the theorem.
\end{proof}

We now prove the related result in Theorem~\ref{Thm: alt main t-cores}. The map will be similar to the map of Theorem~\ref{Thm: main t-cores}. However, this map is inspired by generalizing the work of Han \cite{Han2010}, while the previous result is a generalization of the work of Ono and Sze \cite{ono19974} and Bringmann, Kane, and the first author \cite{BKM}. 

\begin{proof}[Proof of Theorem~\ref{Thm: alt main t-cores}]
	If $W_t(n)$ is the subset of $\Z^t$ defined in the theorem, we may define
	\begin{align}\label{Eq: Han-type map}
		\alpha \colon C_t(n) &\to W_t(n)\notag\\
		[n_k]_{k=0}^{t-1} &\mapsto (2tn_k + 2k+1 -t)_{k=0}^{t-1}=: (w_k)_{k=0}^{t-1},
	\end{align}
	where $[n_k]_{k=0}^{t-1}$ is the $N$-coding of the $t$-core. We must show that the map is well-defined and bijective. In fact, we will show that the conditions of the $N$-coding are equivalent to the conditions in the set $W_t(n)$, proving the bijection. First, because $\sum_{k=0}^{t-1}(2k+1) = t^2$, it is clear that $\sum_{k=0}^{t-1} n_k=0$ is equivalent to $\sum_{k=0}^{t-1} w_k = 0$. The congruence condition $w_k \equiv 2k+1 -t \pmod{2t}$ holds by definition. Finally, we evaluate
	\begin{equation*}
		\sum\limits_{k=0}^{t-1} w_k^2 = \sum\limits_{k=0}^{t-1} (2tn_k +2k+1 - t)^2.
	\end{equation*}
	By expanding the product and using simple sum identities, it is easy to write this as 
	\begin{equation}\label{Eq: SoS}
		8t\left( \frac{t}{2}\sum\limits_{k=0}^{t-1}n_k^2 + \sum\limits_{k=0}^{t-1}kn_k\right) + \frac{t^3 - t}{3} + (4t-2t^2)\sum\limits_{k=0}^{t-1} n_k.
	\end{equation}
	With the properties of an $N$-coding given in Lemma~\ref{Lemma: Garvan size of lists}, we see that \eqref{Eq: SoS} becomes $8tn + \frac{t(t^2-1)}{3}$ as desired. Conversely, given that $\sum_{k=0}^{t-1} w_k^2 = 8tn + \frac{t(t^2-1)}{3}$ and $\sum_{k=0}^{t-1} n_k = \sum_{k=0}^{t-1} w_k = 0$, \eqref{Eq: SoS} shows
	$$\frac{t}{2}\sum\limits_{k=0}^{t-1}n_k^2 + \sum\limits_{k=0}^{t-1} k n_k = n,$$
	i.e. that the $N$-coding comes from a $t$-core of $n$, proving the bijection.
\end{proof}

\subsection{Generic self-conjugate $t$-cores}
It is clear that there are exactly $t-1$ (resp.\@ $t$) families of self-conjugate $t$-cores when $t$ is odd (resp.\@ even). With the same techniques as used in the proof of Theorem~\ref{Thm: main t-cores}, we obtain the following proof of Theorem \ref{Thm: SoS - odd t}.

\begin{proof}[Proof of Theorem \ref{Thm: SoS - odd t}]
	Start with the case that $s=tr$ (recall that $s = \sum_j a_j$). Then by Proposition \ref{Proposition: list to abacus}, the $N$-coding associated to a partition in $SC_t(n)$ has the shape
	\begin{align*}
	[-r,a_1-r,a_2-r,\dots,a_{\frac{t-3}{2}}-r,0,r-a_\frac{t-3}{2}, \dots, r-a_1,r].
	\end{align*}
	Using Lemma \ref{Lemma: Garvan size of lists}, we see that 
	\begin{align*}
	n=t\left(r^2 \left(\frac{t-1}{2}\right) + \sum_{j=1}^{\frac{t-3}{2}} a_j^2 -2a_jr\right) + \sum_{j=1}^{\frac{t-3}{2}} (t-1-2j)(r-a_j) + (t-1)r.
	\end{align*}
	In turn, this leads to 
	\begin{align*}
	tn = \left(tr+\frac{t-1}{2}\right)^2 + \sum_{j=1}^{\frac{t-3}{2}} \left(tr+\frac{t-1-2j}{2}-ta_j\right)^2 - \sum_{j=0}^{\frac{t-3}{2}} \left(\frac{t-1-2j}{2} \right)^2.
	\end{align*}
	Thus identifying the final sum as $\frac{t}{24}(t^2-1)$ we obtain a one-to-one correspondence between this subset of $SC_t(n)$ and representations of $tn+\frac{t}{24}(t^2-1)$ as a sum of $\frac{t-1}{2}$ squares $x_j^2$ ($0\leq j \leq \frac{t-3}{2}$) with each $x_j$ congruent to $\frac{t-1-2j}{2} \pmod{t}$.
	
	It remains to check the remaining cases of $s \pmod{t}$, for which we prove one more case - the rest follow a clear pattern. Next assume that $s \equiv 1 \pmod{t}$. Then the $N$-coding has the shape
	\begin{align*}
	[r+1, a_2-r, \dots, a_{\frac{t-1}{2}} -r,0,r-a_{\frac{t-1}{2}}, \dots, r-a_2,-r-1].
	\end{align*}
	Then Lemma \ref{Lemma: Garvan size of lists} implies that
	\begin{align*}
	n= t\left(\frac{t-1}{2}r^2 +2r+1 +\sum_{j=2}^{\frac{t-1}{2}}a_j^2-2a_jr\right) +\sum_{j=2}^{\frac{t-1}{2}} (r-a_j)(t-2j+1) +(t-1)(-r-1).
	\end{align*}
	Therefore we see that $tn$ is equal to
	\begin{align*}
	\sum_{j=2}^{\frac{t-1}{2}} \left(tr+\frac{t+1-2j}{2} - ta_j \right)^2 + \left(tr+\frac{t+1}{2}\right)^2 +t^2 -t(t-1) -\left(\frac{t+1}{2}\right)^2 - \sum_{j=2}^{\frac{t-1}{2}} \left( \frac{t+1-2j}{2} \right)^2,
	\end{align*}
	which is easily seen to imply that $\lambda$ is a $t$-core partition only if $tn+\frac{t}{24}(t^2-1)$ is a sum of $\frac{t-1}{2}$ squares $x_j^2$, where each $x_j \equiv \frac{t-1-2j}{2} \pmod{t}$ apart from $x_0 \equiv \frac{t+1}{2} \pmod{t}$. The other calculations are similar.
\end{proof}

\begin{proof}[Proof of Theorem \ref{Thm: SoS - even t}]
	The proof for even $t$ is similar to that for odd $t$ and so we only provide the first case. Assume that $s =tr$. Then by Proposition \ref{Proposition: list to abacus}, the first family of partitions has associated $N$-coding
	\begin{align*}
	[-r,a_1-r,\dots,a_{\frac{t}{2} -1} - r, r-a_{\frac{t}{2}-1}, \dots, r-a_1,r].
	\end{align*}
	Using Lemma \ref{Lemma: Garvan size of lists}, we obtain
	\begin{align*}
	n = t\left(\frac{t}{2}r^2 + \sum_{j = 1}^{\frac{t}{2}-1} a_j^2 - 2a_jr \right) + \sum_{j=1}^{\frac{t}{2} - 1} (r-a_j)(t-1-2j) + (t-1)r.
	\end{align*}
	It is easy to see that
	\begin{align*}
	4tn = (2tr+t-1)^2 + \sum_{j=1}^{\frac{t}{2}-1} (2tr + (t-1-2j) -2ta_j)^2 -\sum\limits_{j=1}^{\frac t2 -1}(t-1-2j)^2 - (t-1)^2.
	\end{align*}
	Identifying the final two terms as $\frac{t}{6}(t^2-1)$ we obtain the claim for this case, where each $x_j$ with $0\leq j\leq \frac{t}{2}$ is equivalent to $t-1-2j \pmod{2t}$. The other calculations are again similar.
\end{proof}

We now wish to prove our other result for the connection between self-conjugate $t$-cores and representations as sums of squares, namely Theorem~\ref{Thm: alt SoS}. 

\begin{proof}[Proof of Theorem~\ref{Thm: alt SoS}]
	As noted below Lemma~\ref{Lemma: Garvan size of lists}, a partition is self-conjugate if and only if $n_k = -n_{t-1-k}$ for all $0\le k \le t$. The bijection \eqref{Eq: Han-type map} defines $w_k = 2tn_k + 2k+1-t$, so $w_{t-1-k} = 2tn_{t-1-k} + t-1-2k$. Hence, $n_k = -n_{t-1-k}$ is equivalent to $w_k = -w_{t-1-k}$. The result then follows from Theorem~\ref{Thm: alt main t-cores}.
\end{proof}

\subsection{Examples}
An explicit example of Theorem \ref{Thm: SoS - odd t} is given by \cite{BKM}. Here we describe one simple and one more involved example of Theorem \ref{Thm: SoS - even t} in the cases of $t=4,6$. In the latter case we can relate the output to certain quadratic forms in class groups.

\subsubsection{Self-conjugate $4$-cores}\label{Section: sc4}

By \cite[Theorem 7]{alpoge2014self}, we have
\begin{align}\label{eqn:sc4}
\operatorname{sc}_4(n) = \frac{1}{8} \sharp\{ (x,y) \in \Z^2 \colon x^2+y^2 = 8n+5  \}.
\end{align}
This also has a combinatorial interpretation as follows. Utilizing Proposition \ref{Proposition: list to abacus}, we can determine that the there are four possible shapes of the $N$-coding of self-conjugate $4$-cores.

	\begin{center}
	\begin{table}[H]
		\begin{tabular}{c c}
			{Type of Partition} & {Shape of Associated $N$-coding} \\[5pt]
			I & $[-r,a-r,r-a,r]$ \\[5pt]
			II &  $[r+1,a-r,r-a,-r-1]$\\[5pt] 
			III & $[r+1-a,r+1,-r-1,a-r-1]$ \\[5pt]
			IV & $[a-r,-r-1,r+1,r-a]$ \\[10pt]
		\end{tabular} 
		\caption{\label{Table: list families sc4} The different types of associated $N$-coding for self-conjugate $4$-core partitions.}
	\end{table}
\end{center}

By Lemma~\ref{Lemma: Garvan size of lists}, the size of each type of partition can be related directly to the quantities $r$ and $a$, yielding the following proposition.

\begin{proposition}\label{Prop: lists to quad forms sc4}
	Let $n \in \N$ 
	and $a,r\in\N_0$ be given.
	\begin{enumerate}[wide, labelwidth=!, labelindent=0pt]
		\item[\rm (1)] 
		The Type I partition 
		with parameters $a$ and $r$ 
		is a partition of $n$ 
		if and only if
		\begin{equation*}
		8n+5 =(8r+2-4a)^2 + (4a+1)^2.
		\end{equation*}
		\item[\rm (2)] The Type II partition
		with parameters $a$ and $r$ 
		is a partition of $n$ 
		if and only if 
		\begin{equation*}
		8n+5 = (8r+3-4a)^2 + (4a+2)^2.
		\end{equation*}
		
		\item[\rm (3)] The Type III partition 
		with parameters $a$ and $r$ 
		is a partition of $n$ 
		if and only if 
		\begin{equation*}
		8n+5 = (8r+6-4a)^2 + (4a+1)^2.
		\end{equation*}
		
		\item[\rm (4)] The Type IV partition 
		with parameters $a$ and $r$ 
		is a partition of $n$
		if and only if 
		\begin{equation*}
		8n+5 = (8r+6-4a)^2 + (4a+3)^2.
		\end{equation*}

	\end{enumerate}
\end{proposition}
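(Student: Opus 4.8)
The plan is to treat each of the four types in turn, using only the size formula of Lemma~\ref{Lemma: Garvan size of lists} together with the explicit $N$-codings already recorded in Table~\ref{Table: list families sc4}. For a $4$-core with $N$-coding $[n_0,n_1,n_2,n_3]$ that formula reads $|\lambda| = 2(n_0^2+n_1^2+n_2^2+n_3^2) + (n_1 + 2n_2 + 3n_3)$, since here $t=4$ and $B=[0,1,2,3]$. Before computing any sizes I would first record that each listed coding is genuinely admissible and self-conjugate: in every row the entries sum to zero, and they satisfy $n_k = -n_{3-k}$, which is exactly the self-conjugacy criterion noted below Lemma~\ref{Lemma: Garvan size of lists}. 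For instance, the Type~I coding $[-r,a-r,r-a,r]$ has $n_0=-n_3$ and $n_1=-n_2$, so it is a valid self-conjugate $N$-coding.

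For each type, substituting its $N$-coding into the size formula yields $|\lambda|$ as an explicit quadratic polynomial in $a$ and $r$. Taking Type~I as the model, one computes $2|N|^2 = 4r^2 + 4(a-r)^2$ and the linear contribution $n_1+2n_2+3n_3 = 4r-a$, giving $|\lambda| = 4r^2 + 4(a-r)^2 + 4r - a$. The partition is a partition of $n$ precisely when $|\lambda| = n$, so the asserted equivalence reduces to the purely algebraic identity $8\,|\lambda| + 5 = (8r+2-4a)^2 + (4a+1)^2$, which I would verify by expanding both sides: the quadratic terms $64r^2 + 32a^2$, the cross term $-64ar$, the linear terms $32r - 8a$, and the constant $5$ all match after collecting. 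The remaining three types are handled identically — read off the coding, apply the size formula, and check that $8n+5$ equals the claimed sum of two squares by completing the square in the same fashion.

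Since in each case $|\lambda|$ is a single determined value for given $a$ and $r$, the ``only if'' and ``if'' directions coincide: the partition is of $n$ exactly when $n$ takes that value, which is in turn equivalent to the displayed sum-of-squares equation. I expect no genuine obstacle here; the entire content is the bookkeeping of four completed-square computations, and the only place to be careful is in correctly pairing $B=[0,1,2,3]$ with the entries of each coding so that the linear term $n_1+2n_2+3n_3$ is formed with the right signs, and in tracking the shift by $1$ in the parameters distinguishing Types~II--IV from Type~I.
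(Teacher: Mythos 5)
Your proposal is correct and is essentially the paper's own argument: the paper explicitly omits the proof of this proposition, pointing to the identical computation carried out for self-conjugate $6$-cores (Proposition \ref{Prop: lists to quad forms}), which is exactly what you do --- read the $N$-coding from Table \ref{Table: list families sc4}, apply Lemma \ref{Lemma: Garvan size of lists}, and verify the completed square. Your Type~I expansion checks out ($8n+5 = 64r^2+32a^2-64ar+32r-8a+5$ on both sides), so nothing further is needed.
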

	
We will omit the proof due to its similarity to the proof below for self-conjugate $6$-cores and because this result can also be found using Proposition 3 and Proposition 1 of \cite{ono19974}. 

\begin{example}
	As an illustrative example, consider the partition $\lambda=(4,1,1,1) \in SC_4(7)$. Computing the structure numbers yields $B_1 = 7, B_2=3, B_3=2, B_4=1$. Thus $\lambda$ corresponds to the abacus $(0,1,1,2)$, which in turn corresponds to the $N$-coding $[-1,0,0,1]$. We can identify this as a type I partition with $r = 1$ and $a = 1$. We then find that $\lambda$ corresponds to the solution 
	\begin{align*}
	61 = 6^2 + 5^2.
	\end{align*}
\end{example}

\subsubsection{Self-conjugate $6$-cores and Corollary \ref{Cor: 6-cores}}
Here we completely describe the families of self-conjugate $6$-cores before using Gauss' map to obtain binary quadratic forms of a certain discriminant.

\begin{lemma}\label{Lemma: conditions on A,B..}
	Assume that $A = (0,a,b,c,d,e)$ is an abacus for a self-conjugate $6$-core partition and recall that $s = a+b+c+d+e$. Let $r\in\N_0$. 
	\begin{enumerate}[wide, labelwidth=!, labelindent=0pt]
		\item[\rm (1)] Assume that $s=6r$. Then
		$
		e=2r,$ $
		a+d=2r,$ $
		b+c = 2r.$
		
		\item[\rm (2)] Assume that $s=6r +1$. Then
		$
		a=2r+1,$ $
		b+e=2r,$ $
		c+d=2r.$
		
		\item[\rm (3)] Assume that $s=6r+2$. Then
		$
		a+b=2r+1,$ $
		c=2r+1,$ $
		d+e=2r.$ 
		
		\item[\rm (4)] Assume that $s=6r+3$. Then
		$
		b+c = 2r+1,$ $
		a+d=2r+1,$ $
		e=2r+1.$ 
		
		\item[\rm (5)] Assume that $s=6r+4$. Then
		$
		c+d=2r+1,$ $
		b+e=2r+1,$ $
		a=2r+2.$ 
		
		\item[\rm (6)] Assume that $s=6r+5$. Then
		$
		d+e=2r+1,$ $
		a+b=2r+2,$ $
		c=2r+2.$
		
	\end{enumerate}
\end{lemma}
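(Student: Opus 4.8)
The plan is to read off the $N$-coding of the partition directly from the abacus via Proposition~\ref{Proposition: list to abacus}, and then impose the self-conjugacy relation $n_k = -n_{5-k}$ recorded below Lemma~\ref{Lemma: Garvan size of lists}. Concretely, for $t=6$ the abacus is $(0,a,b,c,d,e)$ with $s=a+b+c+d+e$ beads, and Proposition~\ref{Proposition: list to abacus} says that if we write $\ell + s = 6\alpha_\ell + \beta_\ell$ with $0 \le \beta_\ell \le 5$, then the entry of the abacus in position $\beta_\ell$ equals $n_\ell + \alpha_\ell$. Since position $0$ of our abacus is always $0$, while positions $1,\dots,5$ are $a,b,c,d,e$, this lets us solve for each $n_\ell$ in terms of $a,b,c,d,e$ and $r$.

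I would organize the proof by the six residue classes $s \equiv i \pmod 6$. In each class the six pairs $(\alpha_\ell,\beta_\ell)$ are completely determined: as $\ell$ runs from $0$ to $5$, the offset $\beta_\ell$ runs cyclically through a shifted block of $\{0,1,\dots,5\}$, and exactly one index $\ell$ ``wraps around,'' picking up an extra $+1$ in $\alpha_\ell$. For example, when $s = 6r$ every $\alpha_\ell = r$ and $\beta_\ell = \ell$, so the $N$-coding is $[-r,\,a-r,\,b-r,\,c-r,\,d-r,\,e-r]$; when $s = 6r+1$ the index $\ell=5$ wraps, giving the $N$-coding $[a-r,\,b-r,\,c-r,\,d-r,\,e-r,\,-r-1]$; and the remaining classes are handled identically.

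With the $N$-coding in hand, I would then apply the three self-conjugacy equations $n_0 = -n_5$, $n_1 = -n_4$, and $n_2 = -n_3$. Each yields one of the three asserted linear relations. In the $s=6r$ case these read $e=2r$, $a+d=2r$, $b+c=2r$, which is part (1); in the $s=6r+1$ case they read $a=2r+1$, $b+e=2r$, $c+d=2r$, which is part (2); and the parities $2r$ versus $2r+1$ appearing in the remaining cases are exactly controlled by which index wraps around, and therefore by the residue of $s$.

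The computation is entirely mechanical, so the only point requiring genuine care is the bookkeeping in the second step: tracking, for each residue class, which single index $\ell$ satisfies $\beta_\ell = 0$ (equivalently, which $n_\ell$ is pinned down by the forced $0$ in position $0$ of the abacus) and hence carries the extra $+1$ in $\alpha_\ell$. This wrap-around is precisely what turns some of the right-hand sides from $2r$ into $2r+1$, so getting it right in each of the six cases is the crux; once the $N$-codings are correctly tabulated, the three relations drop out immediately from the involution $n_k = -n_{5-k}$.
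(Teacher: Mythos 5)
Your overall strategy is exactly the paper's: apply Proposition \ref{Proposition: list to abacus} to read the $N$-coding off the abacus $(0,a,b,c,d,e)$ and then impose the self-conjugacy relation $n_k=-n_{5-k}$ (the paper invokes the same fact via \cite[Lemma 4.4]{BKM}). However, the bookkeeping rule that you yourself flag as the crux is stated incorrectly, and taken literally it would break cases (3)--(6). Writing $s=6r+i$ with $0\le i\le 5$, one has $\alpha_\ell=r+\lfloor(\ell+i)/6\rfloor$, so \emph{every} index $\ell\ge 6-i$ picks up the extra $+1$, not only the single index with $\beta_\ell=0$: there are $i$ wrapping indices in all. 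Your two worked examples $i=0$ and $i=1$ have zero and one wrapping index respectively, which is why they come out right, but for $i=2,\dots,5$ there are two through five of them. For instance, with $s=6r+2$ the correct $N$-coding of $(0,a,b,c,d,e)$ is $[\,b-r,\ c-r,\ d-r,\ e-r,\ -r-1,\ a-r-1\,]$, with \emph{both} $\ell=4$ and $\ell=5$ shifted by $r+1$; your rule gives $n_5=a-r$ instead of $a-r-1$, and then $n_0=-n_5$ yields $a+b=2r$ rather than the asserted $a+b=2r+1$. The same discrepancy propagates through cases (4)--(6), where several of the right-hand sides equal $2r+1$ or $2r+2$ precisely because more than one index carries the $+1$ (compare the multiple ``$-r-1$''-type entries in the $N$-codings tabulated in Table \ref{Table: list families}). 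Once you replace ``exactly one index wraps'' by ``the $i$ indices $\ell\ge 6-i$ wrap,'' the rest of your argument is correct and recovers all six cases of the lemma.
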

\begin{proof}
	We prove (1). By Proposition \ref{Proposition: list to abacus}, we see that $A$ corresponds to the $N$-coding $[-r,a-r,b-r,c-r,d-r,e-r]$. Using \cite[Lemma 4.4]{BKM} and the fact that $s = 6r$, the conditions are easy to determine. The other cases follow in the same way.
\end{proof}

Lemma \ref{Lemma: conditions on A,B..} shows that the abaci of self-conjugate $6$-core partitions naturally fall into one of the distinct families given in Table \ref{Table: acabi families}, enumerated with parameters $a,b,r \in \N_0$. 

\begin{center}
	\begin{table}[H]
		\begin{tabular}{c c}
			{Type of Partition} & {Shape of Abaci} \\[5pt]
			I & $(0,a,b,2r-a,2r-b,2r)$ \\[5pt]
			II &  $	(0,2r+1,a,b,2r-b,2r-a)$\\[5pt] 
			III & $(0,a,2r+1-a,2r+1,b,2r-b)$ \\[5pt]
			IV & $(0,a,b,2r+1-b,2r+1-a,2r+1)$ \\[5pt]
			V &  $(0,2r+2,a,b,2r+1-b,2r+1-a)$\\[5pt]
			VI & $(0,a,2r+2-a,2r+2,b,2r+1-b)$ \\[10pt]
		\end{tabular} 
		\caption{\label{Table: acabi families} The different types of abaci for self-conjugate $6$-core partitions.}
	\end{table}
\end{center}

We relate the families of partitions to quadratic forms, with the relationship shown in the following proposition. For brevity, we write only triples without $\pm$ signs - it is clear that changing the sign on any entry preserves the result.

\begin{proposition}\label{Prop: lists to quad forms}
	Let $n \in \N$ 
	and $a,b,r\in\N_0$ be given.
	\begin{enumerate}[wide, labelwidth=!, labelindent=0pt]
		\item[\rm (1)] 
		The Type I partition 
		with parameters $a$, $b$, and $r$ 
		is a partition of $n$ 
		if and only if
		\begin{equation*}
		24n+35 =(12r+3-12a)^2 + (12r+1-12b)^2 + ( 12r+5)^2.
		\end{equation*}
		\item[\rm (2)] The Type II partition
		with parameters $a$, $b$, and $r$ 
		is a partition of $n$ 
		if and only if 
		\begin{equation*}
		24n+35 = (12r+3-12a)^2 +(12r+1-12b)^2 + (12r+7 )^2.
		\end{equation*}
		
		\item[\rm (3)] The Type III partition 
		with parameters $a$, $b$, and $r$ 
		is a partition of $n$ 
		if and only if 
		\begin{equation*}
		24n+35 = (12r +1-12b)^2 + (12r+7-12a )^2 + (12r+9 )^2.
		\end{equation*}
		
		\item[\rm (4)] The Type IV partition 
		with parameters $a$, $b$, and $r$ 
		is a partition of $n$
		if and only if 
		\begin{equation*}
		24n+35 = (12r+9-12a)^2 + (12r+7-12b)^2 + (12r+11)^2.
		\end{equation*}
		
		\item[\rm (5)] The Type V partition 
		with parameters $a$, $b$, and $r$ 
		is a partition of $n$ 
		if and only if
		\begin{equation*}
		24n+35 = (12r+9-12a)^2 + (12r+7-12b)^2 + (12r+13)^2.
		\end{equation*}

		\item[\rm (6)] The Type VI partition 
		with parameters $a$, $b$, and $r$ 
		is a partition of $n$ 
		if and only if
		\begin{equation*}
		24n+35 = (12r+13-12a  )^2 + (12r+5-12b  )^2 +  (12r+15)^2.
		\end{equation*}
	\end{enumerate}
\end{proposition}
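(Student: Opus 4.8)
The plan is to dispose of the six cases directly, exactly as in the proof of Theorem~\ref{Thm: SoS - even t} specialized to $t=6$, since each family is parametrized by the two free integers $a,b$ together with $r$. For a fixed type I would first pass from the abacus listed in Table~\ref{Table: acabi families} to its associated $N$-coding. As recorded in the proof of Lemma~\ref{Lemma: conditions on A,B..}, Proposition~\ref{Proposition: list to abacus} sends an abacus $(0,a,b,c,d,e)$ to the $N$-coding $[-r,\,a-r,\,b-r,\,c-r,\,d-r,\,e-r]$, and the self-conjugacy relations $n_k=-n_{5-k}$ collapse the five abacus entries to $a,b,r$. For the Type~I family this produces the $N$-coding $[-r,\,a-r,\,b-r,\,r-b,\,r-a,\,r]$, which one checks is self-conjugate, and the remaining five types yield analogous shapes.

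The second step is to apply Lemma~\ref{Lemma: Garvan size of lists} with $t=6$ and $B=[0,1,2,3,4,5]$ to compute $|\lambda| = 3|N|^2 + B\cdot N$ as an explicit quadratic polynomial in $a,b,r$; asserting $|\lambda| = n$ is exactly the statement that the given partition is a partition of $n$. For Type~I this gives
$$n = 6r^2 + 6(a-r)^2 + 6(b-r)^2 - 3a - b + 9r.$$
The remaining task is a polynomial identity: expanding the claimed right-hand side shows
$$24n+35 = (12r+3-12a)^2 + (12r+1-12b)^2 + (12r+5)^2,$$
and completing the square is transparent because each target term has the shape $(12(\cdot)+\mathrm{odd})^2 = 144(\cdot)^2+\cdots$, whose leading $144(\cdot)^2$ matches the $24\cdot 3\cdot 2(\cdot)^2$ coming from $24\cdot 3|N|^2$, while the constant $35 = 1^2+3^2+5^2$ arises precisely as the sum of squares of the three odd residues. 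Since $|\lambda|$ is a fixed function of $a,b,r$, the displayed equation is equivalent to $|\lambda|=n$, which gives the ``if and only if'' in both directions at once.

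The same three steps---read off the $N$-coding, apply Lemma~\ref{Lemma: Garvan size of lists}, complete the square---handle Types~II through~VI, the only differences being which abacus entry is pinned by the parity constraint and which odd residues appear; in every case these are a permutation of $\pm1,\pm3,\pm5\pmod{12}$, consistent with the congruence $x_j\equiv\pm(2j+1)\pmod{12}$ of Theorem~\ref{Thm: SoS - even t}. I expect no genuine obstacle here: the content is a finite, routine verification. The only real care needed is to apply the self-conjugacy relations $n_k=-n_{5-k}$ correctly to reduce each $N$-coding to the parameters $a,b,r$ before invoking Lemma~\ref{Lemma: Garvan size of lists}, since mishandling that reduction is the one place where the six completions of the square could go astray.
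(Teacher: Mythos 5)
Your proposal is correct and follows essentially the same route as the paper: pass from the abacus in Table \ref{Table: acabi families} to the $N$-coding via Proposition \ref{Proposition: list to abacus}, apply Lemma \ref{Lemma: Garvan size of lists} with $t=6$ to get $n=6\left(r^2+(a-r)^2+(b-r)^2\right)-3a-b+9r$ for Type I, and verify the completion of the square $24n+35=(12r+3-12a)^2+(12r+1-12b)^2+(12r+5)^2$, with the remaining types handled identically from Table \ref{Table: list families}. No gaps.
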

\begin{proof}
	We only prove (1). Combining the definition with Propoisiton \ref{Proposition: list to abacus}, the Type I partition $\lambda$ with parameters $a$, $b$, and $r$ has the associated $N$-coding $[-r,a-r,b-r,r-b,r-a,r]$. By Lemma \ref{Lemma: Garvan size of lists}, we thus have
	\[
	n=|\lambda|=6\left(r^2+(a-r)^2 +(b-r)^2\right) + (a-r)+2(b-r)+3(r-b)+4(r-a)+5r.
	\]
	Hence we 
	see that 
	\begin{align*}
	24n+35 = & 144\left( r^2 + (a-r)^2 +(b-r)^2 \right) + 24\left(a-r + 2(b-r) +3(r-b) +4(r-a) +5r\right) + 35 \\
	=& 144a^2 - 288ar + 144b^2 - 288br + 432r^2 - 72a - 24b + 216r + 35.
	\end{align*}
	This is exactly the expansion of
	\begin{equation*}
	(12r+3-12a)^2 + (12r+1-12b)^2 + (12r+5)^2.
	\end{equation*}
	The other cases follow in the same way, using the associated $N$-coding in Table \ref{Table: list families}.\qedhere
	
	\begin{center}
		\begin{table}[H]
			\begin{tabular}{c c}
				{Type of Partition} & {Shape of Associated $N$-coding} \\[5pt]
				I & $[-r,a-r,b-r,r-b,r-a,r]$ \\[5pt]
				II &  $[r+1,a-r,b-r,r-b,r-a,-r-1]$\\[5pt] 
				III & $[r+1-a,r+1,b-r,r-b,-r-1,a-r-1]$ \\[5pt]
				IV & $[r+1-b, r+1-a, r+1, -r-1, a-r-1, b-r-1]$ \\[5pt]
				V &  $[r+1-b,r+1-a,-r-1,r+1,a-r-1,b-r-1]$\\[5pt]
				VI & $[b-r,-r-1,a-r-1,r+1-a,r+1,r-b]$ \\[10pt]
			\end{tabular} 
			\caption{\label{Table: list families} The different types of associated $N$-coding for self-conjugate $6$-core partitions.}
		\end{table}
	\end{center}
	\rm
\end{proof}

Altogether, this proves the following theorem.
\begin{theorem}\label{Thm: sc6}
	There is a one-to-one correspondence between $SC_6(n)$ and the set
	\begin{align*}
	\mathscr{S}_6 \coloneqq \{ (x,y,z) \in \Z^3 \colon x^2+y^2+z^2 = 24n+35, (x,y,z) \equiv (\pm 1, \pm 3, \pm 5) \pmod{12} \}.
	\end{align*}
\end{theorem}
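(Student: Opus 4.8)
The plan is to obtain Theorem~\ref{Thm: sc6} as the assembly of the six cases of Proposition~\ref{Prop: lists to quad forms} into a single correspondence. By Lemma~\ref{Lemma: conditions on A,B..} together with Table~\ref{Table: acabi families}, every self-conjugate $6$-core of $n$ is exactly one of the six types I--VI, each parametrized by a triple $(a,b,r)\in\N_0^3$; hence $SC_6(n)$ is the disjoint union of these six parameter families. Proposition~\ref{Prop: lists to quad forms} attaches to each such partition an explicit triple of integers whose squares sum to $24n+35$, so the first step is simply to collect these six maps into one map $\phi$ from $SC_6(n)$ to the set of integer solutions of $x^2+y^2+z^2=24n+35$.

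Next I would check that the image of $\phi$ lies in $\mathscr{S}_6$, i.e.\ that in each of the six formulas the three summands reduce modulo $12$ to a signed permutation of $(1,3,5)$. This is an immediate residue computation: Type~I gives residues $(3,1,5)$, Type~II gives $(3,1,7)\equiv(3,1,-5)$, and the remaining types give $(1,7,9)$, $(9,7,11)$, $(9,7,13)$, and $(13,5,15)$, each of which is congruent to $(\pm1,\pm3,\pm5)\pmod{12}$ in some order. Combined with the observation recorded just before Proposition~\ref{Prop: lists to quad forms} that flipping the sign of any coordinate preserves the representation, this shows $\phi$ takes values in $\mathscr{S}_6$.

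The heart of the matter is to prove that $\phi$ is a bijection onto $\mathscr{S}_6$ (understood up to the equivalence $\sim_{BKM}$ used throughout, so that the six families tile the solution set exactly once, consistent with the $t=6$ instance of Theorem~\ref{Thm: SoS - even t}). To invert $\phi$ I would read the parameters back off a given triple: in each type exactly one coordinate has the ``pure'' shape $12r+c$ with no occurrence of $a$ or $b$, and since $r\ge0$ this coordinate is forced positive, recovering $r$; its residue and sign, together with those of the other two coordinates, single out which type produced the triple, after which $a$ and $b$ are determined linearly by equations of the form $12r+c'-12a=x$. One then verifies that distinct pairs (type, $(a,b,r)$) yield inequivalent triples and that every admissible solution of $x^2+y^2+z^2=24n+35$ is obtained, so that $\phi$ descends to a bijection. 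Equivalently, and more cleanly, this is precisely the content of Theorem~\ref{Thm: SoS - even t} at $t=6$: that theorem already furnishes the abstract bijection between the families of $SC_6(n)$ and the solutions with $x_j\equiv\pm(2j+1)\pmod{12}$, while Proposition~\ref{Prop: lists to quad forms} makes each of the six pieces explicit.

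The step I expect to be the main obstacle is the bookkeeping in this bijection claim: one must pin down the sign and ordering conventions so that the six families neither overlap nor omit any element of $\mathscr{S}_6$. Concretely, several types share the same unsigned residue pattern---Types I,II both place $\pm3,\pm1,\pm5$ in the three positions, Types III,VI give $\pm1,\pm5,\pm3$, and Types IV,V give $\pm3,\pm5,\pm1$---so the delicate point is to check that within each shared pattern the constraints $a,b,r\ge0$ carve out complementary ranges of the relevant residue class (for instance, $\pm(12r+5)$ from Type~I and $\pm(12r+7)$ from Type~II together exhaust the class $\equiv\pm5\pmod{12}$ in the distinguished coordinate, each value exactly once). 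Once this disjoint-and-exhaustive property is confirmed, the remaining verifications are the routine residue and expansion identities already carried out in Proposition~\ref{Prop: lists to quad forms}.
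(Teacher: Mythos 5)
Your proposal takes essentially the same route as the paper: the paper obtains Theorem~\ref{Thm: sc6} by simply assembling the six cases of Proposition~\ref{Prop: lists to quad forms} (via Lemma~\ref{Lemma: conditions on A,B..} and Tables~\ref{Table: acabi families} and~\ref{Table: list families}), its entire proof being the sentence ``Altogether, this proves the following theorem.'' Your extra care about the residue check, the recovery of $(a,b,r)$ and the type from the ``pure'' coordinate, and the reading of the correspondence modulo sign changes (the literal set $\mathscr{S}_6$ is $8$-to-$1$ over $SC_6(n)$) correctly fills in details the paper leaves implicit rather than constituting a different argument.
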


Note that if $x^2+y^2+z^2 = 24n+35$ then all of $x,y,z$ must be odd: if $2 \mid x$ then we would have $y^2+z^2 \equiv 3 \pmod{4}$ which is impossible. However, there may be triples that are not equivalent to one of the form $(\pm 1, \pm 3, \pm 5)\pmod{7}$ as we will elaborate on below.

In the same way as \cite{BKM} obtained for self-conjugate $7$-cores, by Gauss \cite[article 278]{10.2307/j.ctt1cc2mnd}, for each representation of $24n+35$ as the sum of three squares there corresponds a primitive binary quadratic form of discriminant $-96n-140$. This correspondence is invariant under a pair of simultaneous sign changes on the triple $(x,y,z)$. Explicitly, the correspondence is given by the following. For $(x,y,z) \in \mathscr{S}_6$ let $(m_0, m_1, m_2, n_0,n_1,n_2)$ be an integral solution to
\begin{equation*}
x=m_1n_2 - m_2n_1, \hspace{20pt} y = m_2n_0 - m_0n_2 \hspace{20pt} z = m_0n_1- m_1n_0,
\end{equation*}
where a solution is guaranteed by 
Gauss \cite[article 279]{10.2307/j.ctt1cc2mnd}. 
Then 
\begin{equation}\label{Eqn: quad form gauss}
(m_0u +n_0v)^2 + (m_1u +n_1v)^2 + (m_2u+n_2v)^2
\end{equation}
is a form in $\operatorname{CL}(-96n-140)$. A single sign change produces the inverse to the quadratic form to \eqref{Eqn: quad form gauss}, and so under $\sim$ we identify inverses of quadratic forms. In particular, inverse quadratic forms represent the same integers and so already lie in the same genus.

Further, this map is independent of $(m_0,m_1,m_2,n_0,n_1,n_2)$. Similar to \cite{ono19974,BKM}, we find a map $\phi$ taking self-conjugate $6$-cores $\lambda$ to binary quadratic forms of discriminant $-96n-140$ given by

\begin{equation}\label{Equation: defnintion phi for sc6}
\phi \colon \lambda \rightarrow A \rightarrow N \rightarrow (x,y,z) \rightarrow (m_0,m_1,m_2,n_0,n_1,n_2) \rightarrow \text{binary quadratic form}.
\end{equation}

Although we find an explicit map to binary quadratic forms in the class group, here we cannot obtain class numbers because the set of solutions in Theorem \ref{Thm: sc6} is not complete, contrary to the self-conjugate $7$-core case studied in \cite{BKM}. 

A natural question to pose is: are there partitions that explain the remaining solutions for $24n+35 = x^2 + y^2 + z^2$? For example, we have the solution triple $(5,5,3)$ for $n=1$, which does not arise from a self-conjugate $6$-core, or for $n=4$ the triple $(1,3,11)$. In each of these cases, it is also clear that these triples cannot arise from $4$-cores or self-conjugate $7$-cores either (see Table~\ref{PsiOnoSze} and Table~\ref{Table: map of BKM}). Lemma~\ref{Lem: SC_7 C_4} below gives an example of two different $t$-core sets filling out the entire solution set to a sum of three squares. 

\section{Sets of solutions}\label{Sec: sets of solutions}
\subsection{Generic $SC_{2t}$ and $SC_{2t+1}$}
Here we show that on certain progressions that $SC_{2t}$ and $SC_{2t+1}$ are intricately related.
\begin{lemma}\label{Lem: SC2t SC2t+1}
	We have that 
	\begin{align*}
	SC_{2t}((2t+1)n), \qquad SC_{2t+1}\left(8tn+\frac{t(t-1)}{2}\right)
	\end{align*}
	are governed by representations of $8t(2t+1)n+\frac{t}{3}(4t^2-1)$ as a sum of $t$ squares.
\end{lemma}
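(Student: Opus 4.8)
The plan is to apply Theorem~\ref{Thm: SoS - even t} and Theorem~\ref{Thm: SoS - odd t} directly, once with the even parameter $2t$ and once with the odd parameter $2t+1$, and then to verify that the two resulting target integers coincide. Since each of those theorems already supplies an explicit bijection between each family of self-conjugate cores and a set of representations as a sum of squares, the only genuine content here is the arithmetic that the two targets agree and that both are sums of exactly $t$ squares.

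First I would treat $SC_{2t}((2t+1)n)$. Applying Theorem~\ref{Thm: SoS - even t} with the even parameter $2t$ in place of $t$ and with argument $(2t+1)n$ in place of $n$, each family is in bijection with representations of
\[
4(2t)(2t+1)n + \frac{(2t)\big((2t)^2-1\big)}{6}
\]
as a sum of $\frac{2t}{2} = t$ squares. Using $(2t)^2 - 1 = 4t^2 - 1$, this simplifies to $8t(2t+1)n + \frac{t}{3}(4t^2-1)$.

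Next I would treat $SC_{2t+1}\big(8tn + \frac{t(t-1)}{2}\big)$. Applying Theorem~\ref{Thm: SoS - odd t} with the odd parameter $2t+1$ in place of $t$ and with argument $8tn + \frac{t(t-1)}{2}$ in place of $n$, each family is in bijection with representations of
\[
(2t+1)\left(8tn + \frac{t(t-1)}{2}\right) + \frac{(2t+1)\big((2t+1)^2-1\big)}{24}
\]
as a sum of $\frac{(2t+1)-1}{2} = t$ squares. Here I would use $(2t+1)^2 - 1 = 4t(t+1)$ and collect the two terms that are constant in $n$ over a common denominator; since $\frac{t-1}{2} + \frac{t+1}{6} = \frac{2t-1}{3}$, the constant contributes $\frac{t(2t+1)(2t-1)}{3} = \frac{t}{3}(4t^2-1)$, so this target also equals $8t(2t+1)n + \frac{t}{3}(4t^2-1)$.

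Comparing the two computations then shows that both collections of self-conjugate cores are governed by representations of the common integer $8t(2t+1)n + \frac{t}{3}(4t^2-1)$ as a sum of $t$ squares, which is the claim. I do not expect any real obstacle: the only delicate point is the bookkeeping of the constant term in the odd case, which is a routine simplification. It is worth remarking that the congruence and sign-change conditions attached to the two families differ (modulus $4t$ for the even case versus $2t+1$ for the odd case), which is precisely why the statement asserts that both families are \emph{governed by} the same representations rather than placing them in a direct bijection with one another.
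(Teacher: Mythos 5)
Your proposal is correct and follows exactly the paper's own route: the paper's proof simply cites Theorem~\ref{Thm: SoS - even t} for the first set and Theorem~\ref{Thm: SoS - odd t} for the second ``after manipulation,'' and your computation supplies precisely that manipulation, with the constant-term bookkeeping $\frac{t-1}{2}+\frac{t+1}{6}=\frac{2t-1}{3}$ checking out. No gaps.
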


\begin{proof}
	 Theorem \ref{Thm: SoS - even t} implies that $ SC_{2t}((2t+1)n)$ is governed precisely by representations of the stated number. Theorem \ref{Thm: SoS - odd t} implies that $SC_{2t+1}\left(8tn+\frac{t(t-1)}{2}\right)$ is also governed by such representations after manipulation.
\end{proof}

\begin{example}\label{Example: sc6 sc7}
	For example, $sc_7$ and $sc_6$ are connected on the progressions $\operatorname{sc}_6(7n)$ and $\operatorname{sc}_7(24n+3)$, where each is governed by the equation $x^2+y^2+z^2 = 168n+35$. Here, the set $SC_7(24n+3)$ covers the whole set of solutions (there are none where $7$ divides $x,y,$ or $z$), and furthermore each of $x,y,z$ must be odd by reducing the equation modulo $4$. Hence we immediately see that the image of $SC_6(7n)$ is a subset of the image of $SC_7(24n+3)$. Since each map is a bijection, we thus have $$\operatorname{sc}_6(7n) \leq \operatorname{sc}_7(24n+3).$$ 
\end{example}

\subsection{$C_4$ and $SC_{7}$}
Here we consider the connections and relationship between $4$-cores and self-conjugate $7$-cores. In this special case, note that $C_4$ also has a representation as a sum of three squares.
	\begin{lemma}\label{Lem: SC_7 C_4}
		 There is a bijection between
		\begin{align*}
		\{(x,y,z) \in \Z^3 \colon x^2+y^2+z^2 = 392n+245 \} / \sim_{BKM}
		\end{align*}
		and the set $\frac{1}{2} C_4(n) \cup SC_7(56n+33)$, where by $\frac{1}{2}C_4(n)$ we mean half of the elements in $C_4(n)$.
	\end{lemma}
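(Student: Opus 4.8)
The plan is to realize the target set $K^{BKM}(392n+245)$ as a disjoint union governed by divisibility by $7$, and then identify the two resulting pieces with the two summands via the Ono--Sze and Bringmann--Kane--Males maps respectively. The two arithmetic identities driving everything are $392n+245 = 49(8n+5)$ and $7(56n+33)+14 = 392n+245$, which simultaneously connect the modulus to $8n+5$ (the Ono--Sze shift for $C_4$) and to the BKM shift for $SC_7$. First I would establish a clean dichotomy modulo $7$: since $392n+245 \equiv 0 \pmod 7$, any solution satisfies $x^2+y^2+z^2 \equiv 0 \pmod 7$, and as the nonzero squares modulo $7$ are $\{1,2,4\}$ while no one or two of these sum to $0 \pmod 7$, a short quadratic-residue check forces either $7 \mid x,y,z$ or $7 \nmid xyz$, the mixed cases being impossible. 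This splits $K^{BKM}(392n+245)$ into the two $\sim_{BKM}$-closed sets $\mathcal{A}=\{7\mid x,y,z\}$ and $\mathcal{B}=\{7\nmid xyz\}$.

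For $\mathcal{A}$, dividing each coordinate by $7$ gives a bijection $\mathcal{A} \to K^{BKM}(8n+5)$ via $392n+245=49(8n+5)$. Since the Ono--Sze map $\psi$ of Table~\ref{PsiOnoSze} is a bijection $C_4(n)\to K^{OS}(8n+5)$, it remains only to compare $K^{OS}(8n+5)$ with $K^{BKM}(8n+5)$. The two relations differ solely in that $\sim_{BKM}$ permits a single sign change whereas $\sim_{OS}$ permits only simultaneous pairs, so the natural surjection $K^{OS}(8n+5)\to K^{BKM}(8n+5)$ is two-to-one away from triples having a vanishing coordinate. This is exactly the source of the ``$\tfrac12 C_4(n)$'' in the statement: the fixed points of the single-sign-change involution are the triples with a zero entry, which by the congruence $8n+5\equiv 5\pmod 8$ one checks correspond precisely to the self-conjugate $4$-cores. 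Thus $\mathcal{A}$ is identified with $C_4(n)$ modulo conjugation, the intended reading of $\tfrac12 C_4(n)$.

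For $\mathcal{B}$, I would invoke the BKM map $\rho$ of Table~\ref{Table: map of BKM}. Reducing each listed image modulo $7$ shows every coordinate is a nonzero residue, so $\rho$ lands in $\mathcal{B}$; conversely, $\rho$ is a bijection from $SC_7(56n+33)$ onto the $7$-primitive triples in $K^{BKM}(392n+245)$, and the dichotomy above shows that here $7$-primitive is synonymous with $\mathcal{B}$. Hence $\mathcal{B} \cong SC_7(56n+33)$. Assembling $K^{BKM}(392n+245)=\mathcal{A}\sqcup\mathcal{B}$ then yields the claimed bijection with $\tfrac12 C_4(n)\cup SC_7(56n+33)$.

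The main obstacle is the equivalence-relation bookkeeping in the $\mathcal{A}$ piece: reconciling the Ono--Sze relation $\sim_{OS}$ with the ambient $\sim_{BKM}$ is what produces the factor of $\tfrac12$ and forces careful treatment of the degenerate triples, equivalently of self-conjugate $4$-cores as fixed points of conjugation. This is precisely why the $n=1$ instance recorded in the earlier example appears with $C_4(1)$ rather than $\tfrac12 C_4(1)$, the unique $4$-core of $1$ being self-conjugate and hence fixed.
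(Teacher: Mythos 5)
Your proposal is correct and follows essentially the same route as the paper: split $K^{BKM}(392n+245)$ by divisibility by $7$, identify the $7$-primitive triples with $SC_7(56n+33)$ via the map $\rho$ of Table~\ref{Table: map of BKM} (i.e.\ \cite[Corollary 4.8]{BKM}), and identify the triples with $7\mid x,y,z$ with $\tfrac12 C_4(n)$ by dividing through by $7$ and invoking Ono--Sze. You merely spell out two points the paper leaves as ``a simple exercise'' or as a citation --- the all-or-nothing mod-$7$ dichotomy via quadratic residues, and the reading of $\tfrac12 C_4(n)$ as $C_4(n)$ modulo conjugation coming from the $\sim_{OS}$ versus $\sim_{BKM}$ comparison --- both of which are accurate.
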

	\begin{proof}
		We have from \cite[Corollary 4.8]{BKM} that $SC_7(56n+33)$ corresponds to $$\{(x,y,z) \in \Z^3 \colon x^2+y^2+z^2 = 392n + 245, x,y,z \not\equiv 0 \pmod{7}\} / \sim_{BKM} .$$
		We see that the ``missing" elements are those with one variable divisible by $7$. A simple exercise reducing the equation modulo $7$ shows that these are in fact solutions to
		$$\{(x,y,z) \in \Z^3 \colon x^2+y^2+z^2 = 8n+5\}/ \sim_{BKM}.$$
		By \cite[Proposition 2]{ono19974} this set exactly corresponds to $\frac{1}{2}C_4(n)$.
	\end{proof}

\section{A map between $C_4(7n+2)$ and $SC_7(8n+1)$}\label{Section: explicit map}

We wish to give a combinatorial interpretation to the equation 
\begin{equation*}
	\operatorname{c}_4(7n+2) = 2\operatorname{sc}_7(8n+1)
\end{equation*}
for $n \not\equiv 4 \pmod{7}$. To do so, we define a map $\varphi\colon C_4(7n+2) \to SC_7(8n+1)$. Given a $4$-core $\lambda \vdash 7n+2$ with abacus $(0, a_1, a_2, a_3)$, we let $b_j = 4a_j + j$ for $j = 1, 2, 3$ and reorder the indices $\{1,2,3\}=: \{ j_1, j_2, j_3\}$ so that $b_{j_1} < b_{j_2} < b_{j_3}$. We then consider the numbers
\begin{equation*}
	\mathcal{C} := \left\{ b_{j_2}, b_{j_3}, b_{j_2} - b_{j_1}, b_{j_3} - b_{j_1}, \frac{b_{j_2} + b_{j_3} - b_{j_1}}{2}, b_{j_2} + b_{j_3} - b_{j_1} \right\}. 
\end{equation*}
It will be shown in the course of the proof of Theorem~\ref{combinatorialmap} below that the elements of $\mathcal{C}$ are distinct and non-zero modulo $7$, so we will denote the unique element of $\mathcal{C}$ that is congruent to $i\pmod{7}$ by $c_i$ for $i = 1, \dots, 6$. With this notation in mind, we define
\begin{equation}\label{Def: combinatorialmap}
	\varphi(0,a_1, a_2, a_3) := \left( 0, \left\lfloor \frac{c_1}{7} \right\rfloor, \left\lfloor \frac{c_2}{7}\right\rfloor, \left\lfloor \frac{c_3}{7}\right\rfloor, \left\lfloor \frac{c_4}{7}\right\rfloor, \left\lfloor \frac{c_5}{7}\right\rfloor, \left\lfloor \frac{c_6}{7}\right\rfloor \right).
\end{equation}
\begin{theorem}\label{combinatorialmap}
	For $n \not\equiv 4\pmod{7}$, the map $\varphi$ gives a two-to-one map from $C_4(7n+2)$ to $SC_7(8n+1)$.
\end{theorem}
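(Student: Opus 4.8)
The plan is to factor $\varphi$ through representations as sums of three squares and to extract the two‑to‑one behaviour from a change of equivalence relation. The key coincidence is that, with $m:=7n+2$ and $m':=8n+1$, we have $8m+5=7m'+14=56n+21=:N$. Thus the Ono--Sze bijection $\psi$ identifies $C_4(7n+2)$ with $K^{OS}(N)$ (Table~\ref{PsiOnoSze}) and the Bringmann--Kane--Males bijection $\rho$ identifies $SC_7(8n+1)$ with $K^{BKM}(N)$ (Table~\ref{Table: map of BKM}); both describe the representations of the \emph{same} integer $N$ as a sum of three squares, and differ only in whether one quotients by all sign changes ($\sim_{BKM}$) or only by even ones ($\sim_{OS}$). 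I will show that $\varphi=\rho^{-1}\circ\pi\circ\psi$, where $\pi\colon K^{OS}(N)\to K^{BKM}(N)$ is the natural projection.

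First I would record the combinatorics of $\varphi$ intrinsically. With $b_{j_1}<b_{j_2}<b_{j_3}$ and
\[
H_a:=\tfrac{-b_{j_1}+b_{j_2}+b_{j_3}}{2},\qquad H_b:=\tfrac{b_{j_1}-b_{j_2}+b_{j_3}}{2},\qquad H_c:=\tfrac{b_{j_1}+b_{j_2}-b_{j_3}}{2},
\]
a direct manipulation gives $\mathcal C=\{H_a,\,2H_a,\,H_a\pm H_b,\,H_a\pm H_c\}$, with $H_a$ strictly the largest of $H_a,\lvert H_b\rvert,\lvert H_c\rvert$. A computation with Lemma~\ref{Lemma: Garvan size of lists} yields the identity $H_a^2+H_b^2+H_c^2=8\lvert\lambda\rvert+5=N$. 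When $n\not\equiv4\pmod 7$ one has $7\parallel N$, so $N$ is not a sum of two squares; hence no $H$ is divisible by $7$, and since the only multiset of three nonzero squares summing to $0\pmod 7$ is $\{1,2,4\}$, the residues $H_a^2,H_b^2,H_c^2$ are distinct mod $7$. This forces the six elements of $\mathcal C$ to be nonzero and pairwise incongruent mod $7$, so each $c_i$ — and thus $\varphi$ — is well defined.

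Next I would check directly that $\varphi(\lambda)\in SC_7(8n+1)$. From $\sum_i c_i=7H_a$ one gets $S:=\sum_i\lfloor c_i/7\rfloor=H_a-3$; substituting $c_i=7\lfloor c_i/7\rfloor+i$ into the size computation (via Proposition~\ref{Proposition: list to abacus} and Lemma~\ref{Lemma: Garvan size of lists}) the contributions of $H_a$ and of $\sum_i i\,c_i$ cancel and the size collapses to exactly $8n+1$. For self‑conjugacy I would compute the $N$‑coding of the output abacus by Proposition~\ref{Proposition: list to abacus} and verify $n_\ell=-n_{6-\ell}$: since $6+2S=2H_a$, the reflection $\ell\mapsto 6-\ell$ becomes the residue involution $p\mapsto 2H_a-p\pmod 7$, whose fixed rod forces $n_3=0$, whose pair containing the empty rod $p=0$ is matched with the rod carrying $2H_a$, and whose two remaining pairs are exactly $\{H_a+H_b,H_a-H_b\}$ and $\{H_a+H_c,H_a-H_c\}$. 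As $n_\ell+n_{6-\ell}=\tfrac1{7}(c_p+c_{p'}-2H_a)$ on a pair of nonempty rods, each pair summing to $2H_a$ gives $n_\ell+n_{6-\ell}=0$.

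Finally I would count. The set $\mathcal C$, hence $\varphi(\lambda)$, depends only on $H_a$ and $\{\lvert H_b\rvert,\lvert H_c\rvert\}$, i.e. only on the $\sim_{BKM}$‑class of $(H_a,H_b,H_c)$, which $\mathcal C$ also recovers; so the induced map $\bar\varphi\colon K^{BKM}(N)\to SC_7(8n+1)$ is injective. Because $N\equiv5\pmod 8$ the three squares in any representation are $\{0,1,4\}\pmod 8$, so the three absolute values are distinct, and since $7\parallel N$ none of them vanishes; hence the signed‑permutation stabilizer of each triple is trivial and each $\sim_{BKM}$‑class splits into exactly two $\sim_{OS}$‑classes (these two correspond to $\lambda$ and its conjugate). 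Thus $\pi$ is two‑to‑one, and with $\psi$ a bijection $\pi\circ\psi$ is two‑to‑one onto $K^{BKM}(N)$. Since $\rho$ is a bijection $SC_7(8n+1)\to K^{BKM}(N)$ these finite sets have equal cardinality, so the injection $\bar\varphi$ is a bijection and $\varphi=\bar\varphi\circ\pi\circ\psi$ is two‑to‑one onto $SC_7(8n+1)$. I expect the main obstacle to be the identification $\psi(\lambda)=(H_a,H_b,H_c)$ up to $\sim_{OS}$ — that the Ono--Sze triple, rewritten in terms of the $b_j$, really is the triple of half‑sums across each of Ono--Sze's abacus types — which is exactly what ties the intrinsic description of $\varphi$ to the composition above and supplies the surjectivity of $\pi\circ\psi$.
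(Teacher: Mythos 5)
Your proposal is correct and rests on the same backbone as the paper's proof: both realize $\varphi$ as the composition $\rho^{-1}\circ p\circ\psi$ through $K^{OS}(56n+21)\to K^{BKM}(56n+21)$, and both ultimately hinge on the identity $\psi(0,a_1,a_2,a_3)=\bigl(-\tfrac{b_1+b_2-b_3}{2},\tfrac{b_1+b_3-b_2}{2},\tfrac{b_2+b_3-b_1}{2}\bigr)$, which you rightly flag as the essential computation but leave unproven (the paper establishes it, for Type I with the other types asserted to be similar, from Table~\ref{PsiOnoSze}). The genuine divergence is in the last leg: the paper inverts $\rho$ explicitly across the six abacus types of Table~\ref{Table: map of BKM} to get \eqref{rhoinverse} and then matches $\{x,2x,x\pm y,x\pm z\}$ with $\mathcal{C}$, whereas you verify directly that $\varphi(\lambda)$ lands in $SC_7(8n+1)$, note that the output recovers the $\sim_{BKM}$-class, and close with an injectivity-plus-cardinality count against $\rho$; this trades the case analysis for size and self-conjugacy verifications that you only sketch. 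Your justification that $p$ is exactly two-to-one is more careful than the paper's: the paper asserts it from the definitions of $\sim_{OS}$ and $\sim_{BKM}$ and only afterwards explains the role of $n\not\equiv 4\pmod{7}$ via $\operatorname{sc}_4(7n+2)=0$, while you show the signed-permutation stabilizer of each triple is trivial (absolute values pairwise distinct by reduction mod $8$, all nonzero since $7$ divides $56n+21$ exactly once), and the same mod $7$ analysis gives a self-contained proof that the six elements of $\mathcal{C}$ are distinct and nonzero mod $7$ --- a fact the paper defers to the $\rho^{-1}$ computation. Both routes are sound; yours is slightly more conceptual, the paper's more explicit, and neither is complete without the $\psi$-identification you defer.
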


\begin{remark}
The $b_i$ used to define $\varphi$ are essentially hook lengths of the $4$-core. It is not difficult to check from the definition of the abacus that $b_i$ is the largest structure number congruent to $i\pmod{4}$ plus four (or simply equal to $i$ if no such structure number exists). Similarly, the $c_i$ are also essentially hook lengths of the $7$-core, being equal to the largest structure number congruent to $i\pmod{7}$ plus  seven. This is not a coincidence. The maps of \cite{ono19974, BKM} could be re-written in terms of the map $\alpha$ of Theorem~\ref{Thm: alt main t-cores}, and the numbers $w_k := 2(tn_k + k) - (t-1)$ used to define $\alpha$ are essentially just shifts of these structure numbers. While we could rewrite everything in terms of $\alpha$ to prove Theorem~\ref{combinatorialmap}, we do not find it illuminating to do so.
\end{remark}

To prove this, we will $\varphi$ realize as the composition
\begin{equation}\label{Eqn: composition of maps}
	C_4(7n+2) \xrightarrow{\psi} K^{OS}(56n+21) \xrightarrow{p} K^{BKM}(56n+21) \xrightarrow{\rho^{-1}} SC_7(8n+1),
\end{equation}
where $\rho$ and $\psi$ are defined in Table~\ref{Table: map of BKM} and Table \ref{PsiOnoSze} respectively and where $p$ simply maps a triple to itself. It is easy to check that this is well-defined under the given equivalences $\sim_{OS}$ and $\sim_{BKM}$. Identifying $\varphi$ as the composition \eqref{Eqn: composition of maps} will be sufficient to prove Theorem~\ref{combinatorialmap} because $\psi$ and $\omega$ are known to be bijections (see \cite[Proposition 2]{ono19974} and \cite[Corollary 4.8]{BKM}), while the definitions of $K^{OS}(n)$ and $K^{BKM}(n)$ allow us to see that $p$ is a $2$-to-$1$ map. Hence, the rest of the section will be computing the image of $(0,a_1,a_2,a_3)$ under $\rho^{-1} \circ p \circ \psi$. \\

While we already have a definition for the map $\psi$ given in Table \ref{PsiOnoSze}, we wish to write the map in terms of the numbers $a_1$, $a_2$, and $a_3$ instead. This conveniently will not require us breaking the definition into multiple parts.

\begin{lemma}
	\begin{equation}\label{BeadsUnderPsi}
		\psi(0,a_1, a_2, a_3) = \left( -\frac{b_1 + b_2 - b_3}{2}, \frac{b_1 + b_3 - b_2}{2}, \frac{b_2 + b_3 - b_1}{2} \right) \in K^{OS}(8n+5).
	\end{equation}
\end{lemma}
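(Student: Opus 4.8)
The plan is to verify the formula \eqref{BeadsUnderPsi} by directly matching it against the three cases in Table \ref{PsiOnoSze}, exploiting the fact that the quantities $b_j = 4a_j + j$ are precisely the structure numbers (shifted) that distinguish the three abacus types $I(g,C,D)$, $II(g,C,D)$, $III(g,C,D)$ of Ono and Sze. First I would recall that each $4$-core abacus $(0,a_1,a_2,a_3)$ falls into exactly one of these three types, and that Ono and Sze's parametrization expresses $(a_1,a_2,a_3)$ in terms of nonnegative integers $g,C,D$ together with a choice of which column receives the ``extra'' bead. The key observation is that the map $\psi$ is piecewise-defined on these types, whereas the right-hand side of \eqref{BeadsUnderPsi} is a single symmetric-looking expression in $b_1,b_2,b_3$; so the content of the lemma is that this uniform expression reproduces all three table entries simultaneously.

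The main steps, carried out case by case, are as follows. For each type $I$, $II$, $III$, I would write down the abacus $(0,a_1,a_2,a_3)$ in terms of $(g,C,D)$ as given by Ono and Sze, compute $b_1 = 4a_1+1$, $b_2 = 4a_2+2$, $b_3 = 4a_3+3$ explicitly, and then form the three half-sums
\begin{equation*}
	-\frac{b_1+b_2-b_3}{2}, \qquad \frac{b_1+b_3-b_2}{2}, \qquad \frac{b_2+b_3-b_1}{2}.
\end{equation*}
I would then check that these three numbers agree, as an unordered triple up to the equivalence $\sim_{OS}$, with the corresponding row of Table \ref{PsiOnoSze}, namely the triples $(2C-2D-2g-1,\,2C-2D+2g,\,2C+2D+2g+2)$ and its analogues. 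Since $\sim_{OS}$ permits reordering and an even number of simultaneous sign changes, I must be careful to track signs and reorderings consistently rather than demanding literal equality. A final bookkeeping point is to confirm that the three half-sums are genuinely integers: because $b_1+b_2+b_3 = 4(a_1+a_2+a_3)+6$ is even, each pairwise combination $b_i+b_j-b_k$ is even, so the expressions are well-defined in $\Z$.

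The step I expect to be the main obstacle is the sign-and-permutation matching rather than the arithmetic: the right-hand side of \eqref{BeadsUnderPsi} comes with fixed signs on each coordinate, whereas Table \ref{PsiOnoSze} lists representatives that may differ from my computed triple by a reordering and a pair of sign flips. I would therefore organize the verification so that, in each type, I identify explicitly which permutation and which even sign change carries my triple to the tabulated one, and confirm that the residue/ordering conventions implicit in the definition of $b_1,b_2,b_3$ (via $j=1,2,3$) are compatible across all three cases. Once the three cases are checked, the membership in $K^{OS}(8n+5)$ is automatic, since $\psi$ is already known to land there by \cite{ono19974}; alternatively one verifies directly that the sum of squares of the three half-sums equals $b_1^2+b_2^2+b_3^2$ minus the cross terms, which simplifies to $8n+5$ using $|\lambda| = 7n+2$ and the relation between $|\lambda|$ and $a_1,a_2,a_3$.
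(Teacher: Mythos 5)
Your proposal is correct and takes essentially the same approach as the paper: a case-by-case verification against Table \ref{PsiOnoSze}, matching the half-sum expressions in $b_1,b_2,b_3$ to the tabulated triples up to reordering and an even number of sign changes (the paper writes this as solving the small linear system $x+y=4a_2+2$, $y-z=4a_3+3$, $x-z=4a_1+1$ and then substituting $a_i=\frac{b_i-i}{4}$, but the computation is the same). The only cosmetic difference is that the paper carries out type $I$ explicitly and declares the other types analogous, whereas you plan to check all three.
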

\begin{proof}
	We will only show this for type $I$ partitions since the proof is similar in other cases. These are the partitions for which $a_1 = g$, $a_2 = C+g$, and $a_3 = D+g$ for $g,C,D \ge 0$. Writing $\psi(0,a_1,a_2,a_3) = (x,y,z)$, Table \ref{PsiOnoSze} tells us that (by possibly reordering terms)
	\begin{align*}
		x+y &= 4(C+g) + 2 = 4a_2 + 2,\\
		y-z &= 4(D+g) + 3 = 4a_3 + 3,\\
		x-z &= 4g + 1 = 4a_1 + 1.
	\end{align*}
	Solving the linear system of equations from here, we find that 
	\begin{equation*}
		(x,y,z) = \left( 2(a_1 + a_2 - a_3), 2(a_2 + a_3 - a_1) +2, 2(a_2 - a_1 - a_3) - 1\right).
	\end{equation*}
	Replacing $a_i$ by $\frac{b_i - i}{4}$, reordering the terms, and making two sign changes, we obtain \eqref{BeadsUnderPsi}.
\end{proof}

We have now found $\psi(0, a_1, a_2, a_3)$, and we already know that $p(x,y,z) = (x,y,z)$. Thus, we only need to compute $\rho^{-1}$. 
\begin{lemma}
	For $(x,y,z)\in K^{BKM}(7n+14)$, suppose without loss of generality that $x = \max(|x|, |y|, |z|)$. If $\{s_1, \dots, s_6\} = \{x, 2x, x\pm y, x\pm z\}$ such that $s_i \equiv i \pmod{7}$, then 
	\begin{equation}\label{rhoinverse}
		\rho^{-1}(x,y,z) = \left(0, \left\lfloor \frac{s_1}{7}\right\rfloor, \left\lfloor \frac{s_2}{7}\right\rfloor, \left\lfloor \frac{s_3}{7}\right\rfloor, \left\lfloor \frac{s_4}{7}\right\rfloor, \left\lfloor \frac{s_5}{7}\right\rfloor, \left\lfloor \frac{s_6}{7}\right\rfloor\right).
	\end{equation}
\end{lemma}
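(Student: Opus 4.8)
The plan is to exploit that $\rho$ is already known to be a bijection by \cite[Corollary 4.8]{BKM}, so it suffices to verify that the proposed formula inverts $\rho$ on each of the six families of self-conjugate $7$-cores listed in Table~\ref{Table: map of BKM}. Concretely, for each type I take the abacus and its image $(x,y,z)$ under $\rho$, form the six numbers $\{x,2x,x\pm y,x\pm z\}$, record their residues modulo $7$, and check that applying $\lfloor\,\cdot\,/7\rfloor$ to the number in each residue class reproduces the entries of the abacus.

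Before computing, I would settle well-definedness of the construction on a $\sim_{BKM}$-class. The set $\{x,2x,x+y,x-y,x+z,x-z\}$ is unchanged if we flip the sign of $y$ (this merely swaps $x+y$ and $x-y$) or of $z$, so once we fix which coordinate is $x$ and its sign, the six numbers are determined; choosing $x$ to be the coordinate of maximal absolute value with positive sign thus pins down the set. That the first listed coordinate (of the shape $7r+\text{const}$) is the one of largest absolute value follows from the nonnegativity constraints on the abacus parameters; for instance in Type~I one has $0\le a,b\le 2r$, whence $|7r+2-7a|,|7r+1-7b|<7r+3$. Moreover, since $x^2+y^2+z^2=7n+14\equiv 0\pmod 7$ and the nonzero quadratic residues modulo $7$ are $\{1,2,4\}$ with $1+2+4\equiv 0$, the residues of $x,y,z$ are distinct and nonzero up to sign. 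A short check then shows $x$, $2x$, $x\pm y$, $x\pm z$ are six pairwise distinct nonzero residues modulo $7$, hence a permutation of $\{1,\dots,6\}$, so each $s_i$ is well-defined independently of the case analysis.

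For the computation itself, Type~I is representative: with $(x,y,z)=(7r+3,\,7r+2-7a,\,7r+1-7b)$ one finds
\begin{align*}
\{x-y,\,x-z,\,x,\,x+z,\,x+y,\,2x\} = \{1+7a,\,2+7b,\,7r+3,\,14r+4-7b,\,14r+5-7a,\,14r+6\},
\end{align*}
whose residues modulo $7$ are $1,2,3,4,5,6$ respectively, and whose floors after division by $7$ are $a,\,b,\,r,\,2r-b,\,2r-a,\,2r$ --- exactly the nonzero entries of the Type~I abacus. The remaining five types are handled by identical bookkeeping, and in each case the six floors reassemble precisely the listed abacus.

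The main obstacle is organizational rather than conceptual: one must track, type by type, which of the six numbers lands in which residue class modulo $7$ and confirm that the resulting floors match the prescribed abacus. The only genuinely delicate point is the normalization in the second paragraph --- verifying that the maximal-absolute-value coordinate is the first listed one and that the positive-sign convention produces the residues $1,\dots,6$ rather than their negatives --- after which the six explicit verifications are routine.
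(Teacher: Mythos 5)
Your proposal is correct and follows essentially the same route as the paper: both arguments go type by type through Table~\ref{Table: map of BKM}, use the nonnegativity of the abacus parameters to identify the maximal-absolute-value coordinate as the first listed one, and then verify that the six numbers $\{x,2x,x\pm y,x\pm z\}$ reduced mod $7$ and floored by $7$ reassemble the abacus (the paper organizes this by solving for $r,a,b$ and substituting back, which is the same computation read in the other direction). Your quadratic-residue argument for why the six residues are automatically distinct and nonzero is a small clean addition that the paper only establishes implicitly within each case.
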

\begin{proof}
	We only prove this for type I self-conjugate $7$-cores since the proof is analogous in the other cases. Let $\lambda$ be such a partition so that its abacus is of the form 
	\begin{equation}\label{Eqn: abacus of self-conjugate 7-core}
		(0,a,b,r,2r-b,2r-a,2r)
	\end{equation}
	for $a,b,r\in\N_0$. Recall from Table~\ref{Table: map of BKM} that $(x,y,z) = \rho(\lambda) = (7r+3, 7r+2-7a, 7r+1-7b)$. At this stage, we cannot immediately conclude that the elements of the triples are equal due to the equivalence relation $\sim_{BKM}$. However, notice that $0 \le a,b, \le 2r$ in order for the entries of the abacus to be non-negative, so $7r+3$ must be the largest element of the triple $\rho(\lambda)$. In other words, by our assumption on $x$, we conclude that $x = 7r+3$. By rearranging the remaining terms and changing signs, we may also say that $y = 7r+2-7a$ and $z = 7r+1-7b$. We may solve for $r$, $a$, and $b$ to find that 
	\begin{equation*}
		r = \frac{x-3}{7}, \quad a = \frac{x-y-1}{7}, \quad b = \frac{x-z-2}{7}.
	\end{equation*}
	Plugging this into \eqref{Eqn: abacus of self-conjugate 7-core} yields
	\begin{equation*}
		\left( 0, \frac{x-y-1}{7}, \frac{x-z-2}{7}, \frac{x-3}{7}, \frac{x+z-4}{7}, \frac{x+y-5}{7}, \frac{2x-6}{7} \right). 
	\end{equation*}
	Using the fact that each entry must be an integer proves \eqref{rhoinverse}.
\end{proof}

To complete the proof of the theorem, notice that if $(x,y,z) = p\circ \psi(0,a_1,a_2,a_3)$ with $x = \max(|x|, |y|, |z|)$, then $x$ must be equal $\frac{b_{j_2} + b_{j_3} - b_{j_1}}{2}$ by the choice of indices $j_1, j_2,j_3$ and by \eqref{BeadsUnderPsi}. By direct computation, we then see $\{x,2x, x\pm y, x\pm z\} = \mathcal{C}$. Hence, by \eqref{rhoinverse}, the image of $(x,y,z)$ under $\rho^{-1}$ becomes \eqref{Def: combinatorialmap}, finishing the proof of Theorem~\ref{combinatorialmap}.

While the above map is explicit, it is not immediately clear which two $4$-cores have the same image. However, it turns out that there is a simple answer: the map $\varphi$ is invariant under conjugation. To see this, we use Proposition 3 of \cite{ono19974}. We will only focus on the first case, which tells us that for $D \ge C$, $I(g,C,D)$ and $I(D-C,C,C+g)$ are conjugate pairs. Notice that by Table \ref{PsiOnoSze}, $I(g,C,D)$ maps to 
$$ (2C-2D-2g-1, 2C-2D+2g,2C+2D+2g+2),$$
while $I(D-C,C,C+g)$ maps to 
$$ (2C-2D-2g-1, -(2C-2D+2g), 2C+2D+2g+2).$$ 
By the definition of $\sim_{BKM}$, these are the same under $p \circ \Psi$ and hence under $\varphi = \rho^{-1} \circ p \circ \psi$. It is easy to see that the remaining cases in Proposition 3 of \cite{ono19974} also map to the same value, proving the invariance of $\varphi$ under conjugation. 

While $\varphi$ may be invariant under conjugation, the existence of self-conjugate $4$-cores implies that this may not explain the fact that $\varphi$ is $2$-to-$1$. We illustrate that this is where our condition $n \not \equiv 4\pmod{7}$ is essential. As we alluded to following Proposition~\ref{Prop: lists to quad forms sc4}, Proposition 3 and Proposition 1 of \cite{ono19974} illustrate that self-conjugate $4$-cores are exactly the $4$-cores whose image under $\psi$ has an element that is $0$, i.e. they are the elements that map to $8n+5$ being represented as a sum of two squares. However, in the definition of $\varphi$, this would mean that we are considering $8(7n+2) + 5 = 56n+21$ as a sum of two squares, and since $56n+21$ is divisible by $7$, it is easy to check that the $56n+21 = x^2 + y^2$ implies that $7 | x,y$. Writing $x = 7x'$ and $y = 7y'$, this in turn implies that $n-4 \equiv 8n+3 \equiv 0 \pmod{7}$. Thus, if $n \not\equiv 4\pmod{7}$, $\operatorname{sc}_4(7n+2) = 0$, so the preimage of an element of $SC_7(8n+1)$ must equal a pair of conjugate elements of $C_4(7n+2)$.

\end{document}